\def\seq#1_#2{\langle #1_#2:#2\in\omega\rangle}
\def\fc#1|#2{#1\uparrow#2}
\def\ain{\subseteq^*}
\def\N{{\mathbb N}}
\def\Q{{\mathbb Q}}
\def\R{{\mathbb R}}
\def\set#1:#2.{{\{\,#1:#2\,\}}}
\def\Int{\mathop{\rm Int}}
\def\so{\mathop{\rm so}}
\title{On large sequential groups}
\author{Alexander Y.~Shibakov\footnote{Tennessee Tech.\ Universiy,
email: {\tt ashibakov@tntech.edu}}}
\newtheorem{proposition}{Proposition}
\newtheorem{lemma}{Lemma}
\newtheorem{example}{Example}
\newtheorem{question}{Question}
\theoremstyle{definition}
\newtheorem{remark}{Remark}
\newtheorem{claim}{Claim}
\begin{document}
\maketitle
\renewcommand{\thefootnote}{}

\footnote{2010 \emph{Mathematics Subject Classification}: Primary 22A05; Secondary 54H11.}

\footnote{\emph{Key words and phrases}: sequential space,
$k_\omega$-space, sequential group, sequential fan.}

\renewcommand{\thefootnote}{\arabic{footnote}}
\setcounter{footnote}{0}
\begin{abstract}
\noindent We construct, using $\diamondsuit$, an example of a sequential group
$G$ such that the only countable sequential subgroups of $G$ are
closed and discrete, and the only quotients of $G$ that have a
countable pseudocharacter are countable and Fr\'echet. We also show
how to construct such a $G$ with several additional properties (such as make $G^2$
sequential, and arrange for every sequential subgroup of $G$ to be
closed and contain a nonmetrizable compact subspace, etc.).

Several results about $k_\omega$ sequential groups are proved. In
particular, we show that each such group is either locally compact and
metrizable or contains a closed copy of the sequential fan. It is also
proved that a dense proper subgroup of a non Fr\'echet $k_\omega$ sequential
group is not sequential extending a similar observation of T.~Banakh about
countable $k_\omega$ groups.
\end{abstract}
\section{Introduction}\label{introduction}
The study of convergence in the presence of a topologically compatible
algebraic structure dates back to the famous Birkhoff-Kakutani theorem
on the metrizability of first-countable groups. A number of authors
have studied properties such as sequentiality and Fr\'echetness in topological
groups since then, obtaining a variety of results on metrizability of
such groups, as well as discovering several pathologies exhibited by
these classes of spaces.

Most of these efforts have been concentrated on studying countable
sequential and Fr\'echet groups. One possible reason is the intuitive
idea that convergence phenomena are primarily countable in
nature. The Fr\'echet property is also inherited by arbitrary
subspaces essentially reducing the study of separable Fr\'echet groups
to that of countable ones. This research culminated in a beautiful result
of Hru\v s\'ak and Ramos-Garc\'\i a establishing the independence of
the metrizability of all countable Fr\'echet groups from the axioms of
ZFC (their celebrated solution of the well known Malykhin's problem,
see \cite{HRG}).

A number of results exist on countable sequential groups, as
well. The class of $k_\omega$ countable groups (see below for the
definitions of all the concepts used in this introduction) is well understood. The paper
by I.~Protasov and E.~Zelenyuk, \cite{PZ1} introduced a number of
techniques for the study of such groups, that have since then been
applied by a host of researchers to the study of not only countable
sequential but also precompact, complete, etc. group topologies
(see \cite{TB2}, \cite{BDMW}, and \cite{MPS}, for example). An elegant result by
E.~Zelenyuk, \cite{Z} provides a full topological classification of general
countable $k_\omega$ groups.

Not much is known about countable
sequential groups that are not $k_\omega$, aside from some consistent
examples of groups with various pathologies. In particular, there are no
known ZFC examples of sequential countable groups that are not $k_\omega$.

Even less is known about the general, uncountable sequential
groups. It is known that all compact sequential groups are metrizable
(a corollary of a result by Shapirovskii, \cite{SHA}). A
$\Sigma$-product of uncountably many unit circles provides a
well-known example of a Fr\'echet countably compact group that is not
metrizable. Naturally, all separable subspaces of this group are
metrizable, and thus Fr\'echet. It is unknown at the present time if a
countably compact topological group can be made sequential and not
Fr\'echet.

In the class of sequential groups another property comes
into play: in contrast to the Fr\'echet property, sequentiality is not
hereditary. It is an easy observation 
that spaces that are hereditarily sequential are
Fr\'echet. On the other hand, every non discrete sequential space has a non
discrete sequential subspace (a convergent sequence) and every
separable nonmetrizable Fr\'echet
group has a non nonmetrizable countable Fr\'echet subgroup.

In this paper we attempt to demonstrate that in the realm of the
sequential groups `separable' is not as easily replaced with
`countable'. We build, using $\diamondsuit$, an example of an
uncountable (separable if desired) sequential group $G$ all of whose
countable sequential subgroups are closed and discrete (in fact, all of its
sequential subgroups are closed).

One might hope that an attempt to reduce the size of a sequential
group that is not Fr\'echet by taking quotients might produce a
`smaller' group somehow exhibiting the non Fr\'echet property of the `big'
group. A rather weak condition that is desirable in a sequential group
is countable pseudocharacter. Note that a sequential group can contain
arbitrary sequential compact subspaces, while having a countable psudocharacter
would force all such compact subspaces to be first countable. This is
indeed possible if the group is $k_\omega$ (see Lemma~\ref{Sresolve}). The
example above, however, has an additional property that every quotient
of countable psudocharacter of $G$ is countable and Fr\'echet.

Our example is therefore a sequential non Fr\'echet group that does
not `reflect' its pathologies to countable subgroups or smaller
quotients. In the process of building the example, we prove a number
of results about general (not necessarily countable) sequential
$k_\omega$ groups that may be of independent interest.

\section{Definitions and terminology}\label{defterm}
We use standard set theoretic notation and terminology,
see \cite{KU}. If $X$ is a topological space and $C\subseteq X$, $x\in
X$, we write $C\to x$ to indicate that $C$ converges to $x$, i.e.\
$C\ain U$ for every open $U\ni x$.
Recall that a space $X$ is called {\it sequential\/} if for every $A\subseteq X$
such that $\overline{A}\neq A$ there is a $C\subseteq A$ such that
$C\to x\not\in A$.

Let $A\subseteq X$. Define the {\it sequential closure of $A$}, $[A]'
= \{\,x\in X: C\to x\text{ for some }C\subseteq A\,\}$.
Put $[A]_\alpha = \cup\{[A_\beta]':\beta<\alpha\}$ for $\alpha\leq\omega_1$.

Define $\so(X) = \min\{\,\alpha \leq \omega_1 : [A]_\alpha =
\overline{A}\text{ for every }A\subseteq X\,\}$. $\so(X)$ is called
the {\it sequential order\/} of $X$. Spaces of sequential order $\leq
1$ are called Fr\'echet.

The {\it psudocharacter $\psi(x,X)$ of
$X$ at $x\in X$\/} is the smallest cardinality of a family ${\cal U}$
of open neighborhoods of $x$ such that $\cap{\cal U}=\{x\}$. When $G$
is a topological group we write simply $\psi(G)$. It is well known
(and easy to show) that the character (i.e.\ the smallest cardinality
of a base of open neighborhoods at a point) and the pseudocharacter
coinside for compact $X$.

Given an arbitrary set $X$ and a collection of subsets ${\cal
K}\subseteq2^X$, each of which is equipped with a topology $\tau_K$
where $K\in{\cal K}$ one can introduce a topology on $X$ as follows: a
subset $U\subseteq X$ is open in $\tau$ if and only if each $U\cap K$
is open in $\tau_K$. We will say that $\tau$ is {\it determined\/} by
${\cal K}$. Note that as a subspace of $X$ in this topology, the
topology of $K\in{\cal K}$ may be different from $\tau_K$. On the
other hand, if each $(K,\tau_K)$ is compact and Hausdorff and for any
$K,F\in{\cal K}$ the intersection $K\cap F$ is closed in each $K$ and
$F$ and inherits the same topology from either space then the topology of each
$K\in{\cal K}$ as a subspace of $X$ is exactly $\tau_K$. Note that
$\tau$ is $T_1$ if and only if each $\tau_K$ is.

If the family ${\cal K}$ above is countable, consists of compact
spaces and satisfies the condition mentioned at the end of the
previous paragraph, $X$ will be called a {\it $k_\omega$ space}.

For general groups $1$, $\cdot$, and ${}^{-1}$ stand for the unit,
the group operation and the algebraic inverse, respectively. When the
group is known to be abelian these change to the traditional $0$, $+$,
and $-$. It will be convenient to use $\sum^kB$ for the sum
$B+\cdots+B$ of $k$ copies of some subset $B$ of an abelian group
$G$. Put $\sum^0B=B^0=\{0\}$ (here $B^k$ is a Cartesian product of
$B$'s in general).

Call a family ${\cal U}$ of subsets of some group $G$ {\it stable\/}
if for any $U,V\in{\cal U}$ there exists a $W\in{\cal 
U}$ such that $WW^{-1}\subseteq U\cap V$, and $HU=U$ for each
$U\in{\cal U}$ where $H=\cap{\cal U}$ is a normal subgroup of $G$.

If $G$ is a topological group and ${\cal U}$ is a stable
family of open subsets of $G$ it is easy to see that
$H=\cap{\cal U}$ is a closed subgroup of $G$ and $\set p(U):U\in{\cal
U}.$ forms a base of open neighborhoods of $0$ in $G/H$ in some
Hausdorff topology coarser than the quotient topology induced by the
natural quotient map $p:G\to G/H$.

If, in addition, $G$ is an abelian group and $V\subseteq G$ is an open
neighborhood of $0$ then one can extend ${\cal U}$ to some stable
family ${\cal U}'$ of open subsets of $G$ such that there
is a $V'\in{\cal U}'$ with the property that $V'\subseteq V$ and the
cardinalities of ${\cal U}$ and ${\cal U}'$ are the same.

In the case of a non-abelian group $G$, the construction above has to
guarantee that $H$ is a normal subgroup of $H$. This can be done in
the case of a separable $G$ by ensuring that for any $U\in{\cal U}$
and any $x$ in some countable dense subset of $G$ there exists a
$V\in{\cal U}$ such that $xVx^{-1}\subseteq U$.

The simple lemma below demonstrates the main application of stable families.
\begin{lemma}\label{stabres}
Let $G$ be a topological group, ${\cal U}$ be a stable family of open
subsets of $G$, $E\subseteq G$ be a countable set, and
${\cal K}$ be a countable family of subsets of $G$. If $G$ is either
abelian or separable, one can extend ${\cal U}$ to a stable family
${\cal U}'$ of open subsets of $G$ such that $|{\cal U}|=|{\cal U}'|$,
$p(\overline{K}\cap
E)=\overline{p(K)}\cap p(E)$ for each $K\in{\cal K}$, and the
restriction of $p$ to $E$ is one-to-one. Here $p:G\to G/H$ is the
natural quotient map and $H=\cap{\cal U}'$ is a closed normal subgroup
of $G$.
\end{lemma}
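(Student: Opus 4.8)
The plan is to reduce all three requirements to countably many individual tasks, each of which can be met by a single application of the extension procedure described just before the statement, and then to carry out these extensions one at a time, taking the union of the resulting chain at the end. First observe that the inclusion $p(\overline{K}\cap E)\subseteq\overline{p(K)}\cap p(E)$ is automatic for any quotient map, since $p$ is continuous and hence $p(\overline{K})\subseteq\overline{p(K)}$. Thus the substance of the closure requirement is the reverse inclusion, and it unfolds, together with the other two requirements, into the following countable list. For each pair $a\ne b$ in $E$ we must secure $a^{-1}b\notin H$; this gives injectivity of $p|_E$, since $p(a)=p(b)$ exactly when $a^{-1}b\in H$. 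For each pair $(e,K)\in E\times{\cal K}$ with $e\notin\overline{K}$ we must secure $p(e)\notin\overline{p(K)}$; granting this, if $y\in\overline{p(K)}\cap p(E)$, say $y=p(e)$ with $e\in E$, then necessarily $e\in\overline{K}$, whence $y\in p(\overline{K}\cap E)$, giving the reverse inclusion. In the separable non-abelian case one also folds in the countably many normality tasks described above (for each $U$ in the current family and each $x$ in a fixed countable dense set, provide a $V$ with $xVx^{-1}\subseteq U$). Since $E$ and ${\cal K}$ are countable, the whole list is countable.

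Each task is met by inserting a single sufficiently small neighbourhood of $1$. For an injectivity task, $a\ne b$ gives $a^{-1}b\ne 1$, so Hausdorffness provides an open $W\ni 1$ with $a^{-1}b\notin W$; extending so that some family member $V\subseteq W$ forces $a^{-1}b\notin H$ because $H\subseteq V\subseteq W$. For a closure task, $e\notin\overline{K}$ yields, by standard manipulations with neighbourhoods of $1$, a symmetric open $O\ni 1$ with $e\notin KO$; fix a symmetric open $V_0$ with $V_0V_0\subseteq O$ and extend so that some family member $V\subseteq V_0$. Writing $H_n$ for the intersection of the then-current family, one has $KH_nV^{-1}\subseteq KV_0V_0\subseteq KO$, which misses $e$, so $eVH_n\cap KH_n=\emptyset$; equivalently $p(eV)\cap p(K)=\emptyset$. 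As $p(eV)$ is a neighbourhood of $p(e)$, this witnesses $p(e)\notin\overline{p(K)}$.

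I would enumerate the tasks and apply the basic extension once per step, running through $\omega$ steps and letting ${\cal U}'$ be the union. Each step preserves stability and the cardinality of the family, so ${\cal U}'$ is again stable (given $U,V\in{\cal U}'$, pick the finite stage where both first appear and use stability there to find $W$ with $WW^{-1}\subseteq U\cap V$) and, since only countably many sets are added to an infinite ${\cal U}$, one has $|{\cal U}'|=|{\cal U}|$; the limit $H=\cap{\cal U}'$ satisfies $HU=U$ for each $U\in{\cal U}'$ and is a closed normal subgroup, normality coming from the abelian hypothesis or from the folded-in normality tasks in the separable case.

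The point that requires care, and is the main obstacle, is \emph{persistence}: later steps shrink the family and hence shrink $H$, so I must check that a condition secured at stage $n$ survives passage to the final, smaller $H$. This is precisely where the formulation in terms of a witnessing neighbourhood pays off. If $H\subseteq H_n$, then $eVH\cap KH\subseteq eVH_n\cap KH_n=\emptyset$, so $p(eV)\cap p(K)=\emptyset$ still holds in the final quotient, and likewise $a^{-1}b\notin H_n\supseteq H$; hence every task, once completed, stays completed. Once monotonicity is in hand, the three conclusions of the lemma follow by reading off the tasks, and the verification that the union of the increasing chain of stable families is stable with $HU=U$ for the limit $H$ finishes the argument.
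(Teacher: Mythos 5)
Your proof is correct and takes essentially the same route as the paper's: both reduce the lemma to countably many separation tasks (ensure $p(e)\notin\overline{p(K)}$ whenever $e\notin\overline{K}$) and solve each by using the extension remark preceding the lemma to plant a member of the stable family inside a buffered neighborhood of unity ($e\notin KV_0V_0$ in your notation, $e\cdot U\cdot U\cap\overline{K}\cdot U\cdot U=\varnothing$ in the paper's), which is exactly what makes the separation persist as $H$ shrinks. The only cosmetic differences are that the paper absorbs the injectivity of $p$ on $E$ into the closure condition by adding the singletons of $E$ to ${\cal K}$, and performs one simultaneous extension where you run an $\omega$-chain of single extensions and verify stability of the union.
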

\begin{proof}
Extend ${\cal K}$ if necessary to guarantee that every singleton from
$E$ is in ${\cal K}$. For each pair $(e,K)\in E\times{\cal K}$ such
that $e\not\in\overline{K}$ find an open neighborhood of unity
$U\subseteq G$ such that $e\cdot U\cdot U\cap\overline{K}\cdot U\cdot
U=\varnothing$. Using the remarks preceding the statement of the
lemma, extend ${\cal U}$ to a stable family ${\cal U}'$ such that for
each open set $U$ constructed above there is a $V\in{\cal U}'$ such
that $V\subseteq U$.
\end{proof}

Note that extending a stable family ${\cal U}$ to a stable family
${\cal U}'$ of open subsets of $G$ produces a pair of
open group homomorphisms
$G\smash{\mathop{\rightarrow}\limits^{p'}G/H'\smash{\mathop{\rightarrow}\limits^{p}}}G/H$.
Here $H=\cap{\cal U}$, $H'=\cap{\cal U}'$.

The next lemma is a corollary of a more general result.

\begin{lemma}[see \cite{K}]\label{hquot}
An image of a Fr\'echet space under an open map is Fr\'echet. In
general, open maps do not raise the sequential order of a space.
\end{lemma}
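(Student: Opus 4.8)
The plan is to deduce both assertions from the single inequality $\so(Y)\le\so(X)$ for an open continuous surjection $f\colon X\to Y$; the Fr\'echet statement is then just the case $\so(X)\le 1$. One may assume $f$ is onto, since replacing $Y$ by the image $f(X)$ leaves $f$ open onto its range.

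The first ingredient is purely a matter of continuity: for \emph{every} $A\subseteq X$ one has $f([A]')\subseteq[f(A)]'$. Indeed, if $x\in[A]'$ pick $C\subseteq A$ with $C\to x$; for open $V\ni f(x)$ the set $f^{-1}(V)$ is an open neighborhood of $x$, so $C\ain f^{-1}(V)$ and hence $f(C)\ain V$, i.e.\ $f(C)\to f(x)$ with $f(C)\subseteq f(A)$ (in the degenerate case where $f$ is eventually constant along $C$ one simply has $f(x)\in f(A)$ directly, using $T_1$-ness). Thus $f(x)\in[f(A)]'$. Since $[\cdot]'$ is monotone and images commute with the unions taken at limit ordinals, a routine transfinite induction upgrades this to $f([A]_\alpha)\subseteq[f(A)]_\alpha$ for all $\alpha\le\omega_1$ and all $A\subseteq X$.

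The second ingredient is the only place openness is used. For $B\subseteq Y$ and $A=f^{-1}(B)$ I claim $f^{-1}(\cl B)\subseteq\cl A$: if $f(x)\in\cl B$ and $U\ni x$ is open, then $f(U)$ is an open neighborhood of $f(x)$, so it contains some $b\in B$; writing $b=f(u)$ with $u\in U$ gives $u\in U\cap A$, whence $x\in\cl A$.

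To finish, set $\lambda=\so(X)$ and fix $B\subseteq Y$; as $f$ is onto, $f(A)=B$ for $A=f^{-1}(B)$. Given $y\in\cl B$, choose $x\in f^{-1}(y)$; by the openness step $x\in f^{-1}(\cl B)\subseteq\cl A=[A]_\lambda$, the last equality being exactly the meaning of $\so(X)=\lambda$. The induction of the second paragraph then gives $y=f(x)\in f([A]_\lambda)\subseteq[f(A)]_\lambda=[B]_\lambda$. The reverse inclusion $[B]_\lambda\subseteq\cl B$ is automatic, so $[B]_\lambda=\cl B$ for every $B$ and $\so(Y)\le\lambda$. I expect the main conceptual obstacle to be resisting the temptation to \emph{lift} a convergent sequence from $Y$ back to $X$: such a lift need not exist without first countability at the chosen preimage. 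The argument avoids this entirely by lifting only the limit point $y$ and transporting the iterated sequential closure \emph{forward} along $f$, using openness solely to place some preimage of $y$ into $\cl{f^{-1}(B)}$.
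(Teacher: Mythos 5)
Your proof is correct, but the comparison here is lopsided: the paper does not prove Lemma~\ref{hquot} at all --- it is quoted from \cite{K} with the remark that it is ``a corollary of a more general result.'' Your argument is the standard direct one, and it checks out: continuity pushes iterated sequential closures forward ($f([A]_\alpha)\subseteq[f(A)]_\alpha$ by transfinite induction from the successor step $f([A]')\subseteq[f(A)]'$), openness pulls closures back ($f^{-1}(\overline{B})\subseteq\overline{f^{-1}(B)}$ for $A=f^{-1}(B)$), and combining the two at $\lambda=\so(X)$ gives $\overline{B}=[B]_\lambda$ for every $B\subseteq Y$, i.e.\ $\so(Y)\leq\so(X)$, with the Fr\'echet assertion as the case $\lambda\leq1$; the reduction to surjective $f$ and the degenerate case of a sequence whose image is finite are both handled properly (the appeal to $T_1$ there is harmless and in fact avoidable, since the relevant constant sequence converges to $f(x)$ by continuity alone). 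What your write-up buys is a self-contained, elementary proof that unifies both sentences of the lemma through the single inequality $\so(Y)\leq\so(X)$; what the citation buys the paper is the stronger theorem standing behind it. It is worth noting that your argument essentially recovers that stronger theorem: openness is used only to guarantee that \emph{every} point of $f^{-1}(y)$ lies in $\overline{f^{-1}(B)}$ whenever $y\in\overline{B}$, whereas the final step only needs \emph{some} such point --- and the existence of some such point for every $B$ and $y$ is precisely pseudo-openness (hereditary quotientness). So your proof, with one line changed, shows that pseudo-open maps do not raise sequential order, which is presumably the more general result of \cite{K} that the paper invokes.
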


An uncountable group $G$ will be called {\it co-countable\/} if every quotient of
$G$ that has countable pseudocharacter is countable.

Recall that a (necessarily abelian) group $G$ is called {\it boolean\/} if $a+a=0$
for every $a\in G$. Every boolean group can be naturally viewed as as
vector space over the two element field ${\mathbb F}_2$.

\section{Test Spaces}\label{TS}
The study of sequential spaces often makes use of a wide variety of smaller
(usually countable) canonical spaces to investigate various
convergence phenomena. Below we present the definitions of the test
spaces used in this paper.

Let $S_n=[\omega]^{\leq n}$. Put $U\subseteq S_n$ open if and only if
for every $s\in U$ the set $\{\,s^\frown n\in S_n : s^\frown n\not\in
U\,\}$ is finite. Now each $S_n$ is sequential and $\so(S_n)=n$ for
$n<\omega$, whereas $\so(S_\omega)=\omega_1$. $S_2$ is known also as {\em
  Arens' space} while $S_\omega$ is referred to as {\em
  Arkhangel'skii-Franklin space}. The {\it sequential fan $S(\omega)$\/} is defined as
$\omega^2\cup\{0\}$ with the topology in which each point of
$\omega^2$ is isolated and the base of neighborhoods of $0$ is formed
by $U_f=\{0\}\cup\set (i,j):j\geq f(i).$ where $f:\omega\to\omega$.

It is convenient to have a `functional' description of a closed
embedding of $S_2$ into a space which is provided by the following definition.
An injective map $s:\omega^2\to X$ is called a {\it
free $S_2$-embedding\/} into $X$ (with respect to some topology $\tau$
on $X$) if $s(i,j)\to a_i$ and $a_i\to v$ where all $s(i,j)$, $a_i$,
and $v$ are distinct, and each set of the form $D=\set s(i,j):j\leq
f(i).$ for some $f:\omega\to\omega$ is a closed discrete subset of
$X$. If the topology of $X$ is determined by a countable family of
compact subspaces $\seq F_n$ then the last requirement
is equivalent to the condition that each set $\set i:s(i,j)\in
F_n\hbox{ for some $j\in\omega$}.$, $n\in\omega$ is finite. The point
$v$ will be called {\it the vertex\/}
of $s$. It is well known (see \cite{N}) that a free $S_2$-embedding
exists for every countable $k_\omega$ non discrete group and every
countable sequential non Fr\'echet space.

The subspace $S_2^-=\set x\in S_2:|x|\in\{0,2\}.$ of $S_2$ is a
standard example of a non sequential subspace of a sequential space.

The space $D_\omega$ (called ${\mathbb L}$ in \cite{VD}) is defined as
follows. $D_\omega=\omega^2\cup\{(\omega,\omega)\}$ where all the points in
$\omega^2$ are isolated and the base of open neighborhoods of $(\omega,\omega)$
consists of $(\omega\setminus
n)\times\omega\cup\{(\omega,\omega)\}$. The subspace
$(\omega+1)^2\setminus(\{\omega\}\times\omega\cup\omega\times\{\omega\})$
of the product of two convergent sequences is homeomorphic to $D_\omega$.

The diagonal $\Delta=\set
((m,n),\{m,m+n+1\}):m,n\in\omega.\cup\{((\omega,\omega),\varnothing)\subseteq
D_\omega\times S_2$ is a closed subset of the product homeomorphic to
$S_2^-$ (see \cite{TU1}, Remark~5.3). This example is an old result of
van Douwen (see \cite{VD}).

\section{$k_\omega$ groups}\label{kwg}
The following lemma is a group theoretic version of a result of E.~van
Douwen on the non productive nature of sequentiality mentioned at the
end of Section~\ref{TS}. Just as the
original result, its proof embeds a closed copy of $S_2^-$ in the group.

\begin{lemma}[see \cite{BZ1}, Lemma~4]\label{vD}
If $G$ is a topological group that contains closed copies of both
$S(\omega)$ and $D_\omega$ then $G$ is not sequential.
\end{lemma}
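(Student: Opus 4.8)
The plan is to exploit the closed copies of $S(\omega)$ and $D_\omega$ together with the group operation to build a closed copy of $S_2^-$ inside $G$, which by van Douwen's observation forces non-sequentiality. The key idea is that $D_\omega$ supplies a ``grid'' $\omega^2$ converging along columns-into-rows toward a single vertex in a way that is genuinely two-dimensional, while $S(\omega)$ supplies, through each of its spokes, the sequences needed to realize the $S_2^-$ pattern; the group multiplication is what glues the two test spaces together so that the required closed discrete and convergence conditions can be read off simultaneously.

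First I would fix closed embeddings $d\colon D_\omega\to G$ and $e\colon S(\omega)\to G$, normalizing (by translating, using that translations are homeomorphisms of $G$) so that both vertices map to the unit $1$. Next I would form the product-like subset $A=\set d(m,n)\cdot e(i,j):\ (m,n)\in\omega^2,\ (i,j)\in\omega^2.$ and, guided by the van Douwen diagonal $\Delta\subseteq D_\omega\times S_2$ described in Section~\ref{TS}, select a suitably indexed subfamily of $A$ that mirrors the combinatorics of $S_2^-$. Concretely, I would arrange the indices so that products of points approaching the $D_\omega$-vertex along its basic neighborhoods, combined with points on the matching spoke of $S(\omega)$, yield a double array whose ``first-level'' limits exist (the images $a_i$) but whose ``second-level'' limit point $v=1$ is only reachable through the full two-step iteration --- exactly the defining feature of $S_2^-$, the subspace of $S_2$ retaining only the levels of size $0$ and $2$.

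The heart of the argument, and the step I expect to be the main obstacle, is verifying that the resulting set is \emph{closed and discrete in the right places}: I must show that for each choice of ``diagonal'' function the selected products form a closed discrete subset of $G$ (so that the copy of $S_2^-$ genuinely fails to be sequential inside $G$), while the full convergence structure of $S_2$ is still present one level up. Here the two test spaces pull in opposite directions --- $D_\omega$ forces escape to infinity along its rows, which keeps diagonal selections from accumulating, whereas $S(\omega)$ forces genuine convergence of each spoke --- and reconciling them requires controlling how group multiplication interacts with the two neighborhood filters at $1$. I would use the continuity of $\cdot$ at $(1,1)$ together with the free $S_2$-embedding criterion stated in Section~\ref{TS} (finiteness of $\set i:s(i,j)\in F_n.$ for the $k_\omega$ decomposition, when available) to certify the closed-discreteness of the diagonal sets.

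Finally, once a closed copy of $S_2^-$ is exhibited, I would invoke the fact, recalled in Section~\ref{TS}, that $S_2^-$ is a non-sequential subspace of the sequential space $S_2$; but more to the point, a closed copy of $S_2^-$ in $G$ witnesses a set whose closure is obtained only after two steps of sequential closure yet which admits no single convergent sequence to its missing adherent point, contradicting sequentiality of $G$ directly. Thus $G$ cannot be sequential, completing the proof.
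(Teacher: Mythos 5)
Your overall plan --- normalize the vertices, pair the columns of $D_\omega$ with the spokes of $S(\omega)$ via the group operation in imitation of van Douwen's diagonal, and conclude by exhibiting a closed copy of $S_2^-$ --- is exactly the strategy of the proof this paper relies on (the paper gives no proof of Lemma~\ref{vD} at all: it cites \cite{BZ1}, Lemma~4, and records only that the proof embeds a closed copy of $S_2^-$ in the group). But your sketch leaves unproved precisely the step that \emph{is} the lemma, and the tool you name for it cannot work here. You propose to certify the closed-discreteness of the diagonal selections by ``the free $S_2$-embedding criterion (finiteness of $\{\,i : s(i,j)\in F_n\ \hbox{for some } j\,\}$ for the $k_\omega$ decomposition, when available)'' --- but it is not available: $G$ is an arbitrary topological group, with no family $\langle F_n\rangle$ of compacta determining its topology. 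Moreover, continuity of multiplication at $(1,1)$ by itself works \emph{against} you: continuity is what threatens to create unwanted limit points of your product set (a product of a closed discrete sequence with a null sequence can converge anywhere); it cannot be what rules them out.

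What actually closes the gap is multiplication by inverses, which transfers any hypothetical convergence of the products back onto one of the two factors, where it contradicts closedness of the given copies. Concretely, write $y_{m,n}=d(m,n)$ and $x_{m,j}=e(m,j)$ with both vertices translated to $1$, and let $P=\{\,y_{m,n}x_{m,n} : m,n\in\omega\,\}\setminus\{1\}$ (your diagonal selection). Suppose distinct points $y_{m_k,n_k}x_{m_k,n_k}\to g$ in $G$. If some subsequence has $m_k$ constant, then along it $x_{m_k,n_k}\to 1$ (a spoke converges to the vertex), hence $y_{m_k,n_k}=(y_{m_k,n_k}x_{m_k,n_k})x_{m_k,n_k}^{-1}\to g$, contradicting the fact that a column $\{m\}\times\omega$ of the closed copy of $D_\omega$ is closed and discrete in $G$. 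Otherwise $m_k\to\infty$, so $y_{m_k,n_k}\to 1$ (this is exactly how $D_\omega$ converges to its vertex), hence $x_{m_k,n_k}=y_{m_k,n_k}^{-1}(y_{m_k,n_k}x_{m_k,n_k})\to g$, contradicting the fact that a selection of at most one point per spoke from the closed copy of $S(\omega)$ is closed and discrete in $G$. Thus $P$ is sequentially closed; on the other hand $1\in\overline{P}\setminus P$, by applying continuity of multiplication at $(1,1)$ to a basic neighborhood of the $D_\omega$-vertex (whole columns $m\geq M$) and tails of the corresponding spokes. A sequentially closed set that is not closed contradicts sequentiality of $G$, and you are done. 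Note this also shows you never need a genuinely \emph{homeomorphic} closed copy of $S_2^-$ --- verifying the neighborhood trace at the vertex, which your sketch would additionally owe --- only the sequentially-closed-but-not-closed set $P$; that is the cleanest way to organize the argument you outlined.
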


The result below is probably folklore. The version presented here does not
require the knowledge of the topology on $G$ in advance. The proof is
given for the abelian case only, although the statement holds in a
more general setting.
\begin{lemma}\label{komega}
Let $G$ be a group, and $\seq F_n$ be a cover of $G$.
Suppose each $F_n$ is given a compact Hausdorff topology such that for
any $i,j\in\omega$ the set $F_i\cap F_j$ is closed in both $F_i$ and
$F_j$ and the induced topologies are the same. Suppose further that
the sums, inverses, and unions of any finite number of $F_i$'s are contained in
some (possibly different) $F_n$'s and that
the addition and algebraic inverse maps restricted to
the corresponding (products of) compacts are continuous (for any large
enough compact range). Then the
topology $\tau$ determined by $\seq F_n$ on $G$ is a Hausdorff group topology.
\end{lemma}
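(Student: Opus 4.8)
The plan is to verify the two assertions separately: that the operations are $\tau$-continuous, and that $\tau$ is Hausdorff. I would begin with the bookkeeping already supplied by the discussion of determined topologies in Section~\ref{defterm}. Since each $(F_n,\tau_{F_n})$ is compact Hausdorff and the intersections $F_i\cap F_j$ are closed in, and inherit the same topology from, both $F_i$ and $F_j$, the subspace topology that $\tau$ induces on each $F_n$ is exactly $\tau_{F_n}$, and $\tau$ is $T_1$ (because each $\tau_{F_n}$ is). Because any finite union of the $F_i$ is again contained in some member of the family, I would then pass to an increasing cofinal sequence $F_{n_0}\subseteq F_{n_1}\subseteq\cdots$; this changes neither $\tau$ nor the hypotheses, so without loss of generality $F_0\subseteq F_1\subseteq\cdots$ and $\tau$ is the direct-limit topology of this tower.

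The defining property of a determined topology is its universal mapping property: a map $f\colon G\to Y$ is continuous precisely when each restriction $f|_{F_n}$ is continuous. To apply this to addition I need $G\times G$ to carry the topology determined by the products $\set F_i\times F_j:i,j\in\omega.$. Granting that (see the next paragraph), the restriction of $+$ to $F_i\times F_j$ has range inside some $F_n$ by hypothesis and is continuous as a map into $(F_n,\tau_{F_n})$; since $F_n$ is embedded in $(G,\tau)$, the composite $F_i\times F_j\to G$ is $\tau$-continuous, so $+$ itself is $\tau$-continuous. The algebraic inverse is handled identically using $-F_i\subseteq F_m$.

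The main obstacle is precisely this identification of the product topology, i.e.\ that $\tau\times\tau$ coincides with the topology determined by $\set F_i\times F_j:i,j\in\omega.$ on $G\times G$; this is the classical fact that a finite product of $k_\omega$-spaces is again $k_\omega$ with the expected determining family. One inclusion is immediate, since each $F_i\times F_j$ is a subspace of $(G,\tau)\times(G,\tau)$. For the reverse inclusion I would take a set $W$ with every $W\cap(F_i\times F_j)$ open, fix a point $(a,b)\in W$ with $a,b\in F_{n_0}$, and build by induction along the tower nested open boxes $U_n\times V_n$ with $a\in U_n$, $b\in V_n$, $U_n,V_n$ open in $F_n$, with closures (taken in the compact $F_n\times F_n$) contained in $W$, and with $U_n\supseteq\overline{U_{n-1}}$, $V_n\supseteq\overline{V_{n-1}}$. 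The nesting forces $U=\bigcup_nU_n$ and $V=\bigcup_nV_n$ to meet each $F_k$ in an open set, hence to be $\tau$-open, while $U\times V\subseteq W$. Here compactness of each finite stage $F_n$ (used to shrink a box off the closed complement of $W$) together with the countability of the tower (to run the induction) is exactly what makes the diagonal argument close, and this is the technical heart of the lemma.

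Finally, Hausdorffness requires no separate combinatorics. Once $(G,\tau)$ is known to be a $T_1$ group with continuous operations, the diagonal of $G\times G$ is the preimage of the closed singleton $\{0\}$ under the continuous map $(x,y)\mapsto x-y$, and is therefore closed in $\tau\times\tau$; hence $\tau$ is Hausdorff. This yields a Hausdorff group topology determined by $\seq F_n$ and completes the proof.
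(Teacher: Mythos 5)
Your proof is correct, but it is organized around a different pivot than the paper's. You reduce everything to the classical $k_\omega$ product theorem: that $\tau\times\tau$ on $G\times G$ coincides with the topology determined by the family $\{F_i\times F_j\}$, proved by your nested-box induction along an increasing tower, after which joint continuity of the operations falls out of the universal mapping property of determined topologies, and Hausdorffness follows from $T_1$ plus the closed-diagonal criterion. The paper never touches the product topology: it first observes that $\tau$ is translation invariant (using the covering hypothesis and the compatibility of the operations with the $F_n$'s), and then runs a single countable induction producing, for each $\tau$-open $U\ni 0$, an increasing chain of open sets $V_n\subseteq\cup_{i\leq n}F_i$ with $\overline{V_n}-\overline{V_n}\subseteq U$ and $\overline{V_m}\subseteq V_n$ for $m<n$; the union $V$ is then $\tau$-open with $V-V\subseteq U$, and for an abelian group translation invariance plus continuity of subtraction at the origin is all one needs. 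The two arguments share the same technical engine (a diagonal induction along the tower, using compactness and continuity of the operations on compact pieces to nest closures), but the trade-offs differ: the paper's route is shorter and stays inside $G$, at the price of genuinely using commutativity (for a non-abelian group one would additionally need to control conjugation, which is why the paper states its proof for the abelian case only), whereas your route proves a stronger, reusable intermediate fact and establishes joint continuity at every point directly, so it applies verbatim to non-abelian $G$. One small point to make explicit if you write this up: after passing to the increasing cofinal subtower, the identification $U\cap F_k=\bigcup_{n\geq k}(U_n\cap F_k)$, with each $U_n\cap F_k$ open in $F_k$ because $F_k$ is a (closed) subspace of $F_n$ by the compatibility hypothesis, is what makes the unions $U=\bigcup_n U_n$ and $V=\bigcup_n V_n$ $\tau$-open; this is where the hypothesis that the intersections carry matching topologies is actually consumed.
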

\begin{proof}
Since $\tau$ is easily seen to be $T_1$ it is enough to show that
$\tau$ is a group topology on $G$. Since $\seq F_n$ covers $G$, $\tau$
is invariant with respect to translations.

Let $U$ be a subset of $G$ such that $0\in U$ and $U\cap F_n$ is
open for every $n\in\omega$. Build, by induction on $n\in\omega$, a family
$V_n$, such that:
\begin{itemize}
  \item[(1)] $V_n$ is an open subset of $\cup_{i\leq n}F_i$ such that
    $0\in V_n\subseteq \overline{V_n}\subseteq U$;

  \item[(2)] $\overline{V_n}-\overline{V_n}\subseteq U$

  \item[(3)] $\overline{V_m}\subseteq V_n$ for $m<n$.

\end{itemize}
Assume $F_0=\{0\}$ and suppose $V_i$ have been constructed for
$i<n$. Since $\overline{V_{n-1}}$ is a compact subset of $\cup_{i\leq
  n}F_i$ and $-:F_m^2\to F_k$ is continuous (as well as each
  embedding $F_i\subseteq F_m$),
  where $F_m\supseteq\cup_{i\leq n}F_i$, $F_k\supseteq F_m-F_m$, one
can find an open $V_n\subseteq\cup_{i\leq n}F_i$ such that
$\overline{V_{n-1}}\subseteq V_n$ and
$\overline{V_n}-\overline{V_n}\subseteq U$. Now (1)--(3) are immediate
so put $V=\cup\seq V_n$. Then (1) and (3) imply that $V$ is open in
$\tau$ and (2) and (3) show that $V-V\subseteq U$.
\end{proof}

As a simple application of the lemma above observe that given a
topological group $G$ and a countable family ${\cal K}$ of compact
subspaces of $G$ the finest group topology on $G$ that induces the
original topology on each $K\in{\cal K}$ is $k_\omega$. For a proof
simply consider the `algebraic closure' of ${\cal K}$.

The next lemma is not stated in full generality (see Remark~\ref{Rgroups}) as
we are only interested in its applications in the more narrow setting below.

\begin{lemma}\label{seq}
Let $G$ be a boolean $k_\omega$ group and $D\subseteq G$ be an infinite closed discrete
subset of $G$. Let $a\in G$. Then there exists an infinite subset $C\subseteq D$
such that the finest group topology on $G$ which is coarser
than the original topology on $G$ and such that $C\to a$ is a
$k_\omega$ Hausdorff topology on $G$.
\end{lemma}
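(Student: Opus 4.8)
We may assume $a=0$: translation by $a$ is a homeomorphism of $(G,\tau)$ that carries group topologies coarser than $\tau$ to group topologies coarser than $\tau$, and replaces the closed discrete set $D$ by the closed discrete set $D+a$, so producing $C+a\subseteq D+a$ with $C+a\to 0$ is the same as producing $C\subseteq D$ with $C\to a$. Fix a countable family $\seq F_n$ of compact sets determining $\tau$, taken increasing, with $F_0=\{0\}$, and closed under finite sums and unions (its algebraic closure, cf.\ the remark following Lemma~\ref{komega}); inverses are trivial since $G$ is boolean, and $F_n\cap D$ is finite for each $n$ because $D$ is closed discrete.

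The plan is to realize $\tau'$ as a $k_\omega$ topology through Lemma~\ref{komega}. Put $S=C\cup\{0\}$, topologized as a nontrivial convergent sequence, and let $\mathcal K'$ be the algebraic closure of $\{S\}\cup\set F_n:n\in\omega.$, that is, the finite unions of sets $F_i+\sum^k S$, each carrying the quotient topology induced by addition from the compact product $F_i\times S^k$. This family covers $G$ (it contains every $F_n=F_n+\sum^0 S$) and is closed under the group operations by construction. If each such quotient is compact Hausdorff, the topologies agree on overlaps, and the relevant addition maps are continuous into a large enough member — which holds because every element of $\mathcal K'$ is a quotient of a product of copies of the $F_i$ and of $S$, and addition is, at that level, a reshuffling of coordinates followed by a defining quotient map — then Lemma~\ref{komega} shows that the topology $\tau'$ determined by $\mathcal K'$ is a Hausdorff $k_\omega$ group topology; the compatibility clause of Section~\ref{defterm} then guarantees that $S$ is a subspace of $(G,\tau')$, so $C\to 0$ in $\tau'$.

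The one substantial point — and the main obstacle — is the Hausdorffness of the quotients, equivalently the closedness of the fibre relation of each addition map $F_i\times S^k\to G$. I would secure it by a diagonal choice of $C$. First thin $D$ to an infinite $\mathbb F_2$-independent set (possible since $\langle D\rangle$ is an infinite, hence infinite-dimensional, boolean group, so any maximal independent subset of $D$ is infinite); then refine the choice recursively, picking $c_n\in D$ outside the finite set $(F_n+\langle c_0,\dots,c_{n-1}\rangle)\cap D$ and distinct from the points already chosen. This guarantees that for every $N$ only finitely many $\mathbb F_2$-sums of elements of $C$ meet $F_N$, and those involve only $c_j$ with $j<N$. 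Consequently a coincidence $f+\sum_j c_{n_j}=f'+\sum_l c_{m_l}$ with $f,f'\in F_i$ rewrites, after cancellation, as a $C$-sum lying in $F_i+F_i\subseteq F_N$, forcing its surviving indices to be bounded by $N$; hence along any convergent sequence of such coincidences the large-index summands tend to $0$ in $S$ on both sides simultaneously, while the bounded parts range over a finite set, so the limiting tuples still have equal images. This is precisely closedness of the fibre relation, hence Hausdorffness. Compatibility on overlaps is then automatic, since an overlap of two generators is again a finite union of sets $F_i+\sum^k S$ carrying the same quotient topology.

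Finally, $\tau'\subseteq\tau$ because the identity $(G,\tau)\to(G,\tau')$ is continuous: every generator $F_i+\sum^k S$ is a $\tau$-continuous image of a $\tau$-compact set, hence $\tau$-compact, so each $\tau'$-open set is $\tau$-open. For maximality, let $\sigma\subseteq\tau$ be any group topology with $C\to_\sigma 0$. Then each $F_i$ is $\sigma$-compact, and the $\sigma$-topology on $S$ is coarser than its convergent-sequence topology, so for our chosen topologies the addition map $F_i\times S^k\to(G,\sigma)$ is continuous; it factors through the quotient $F_i+\sum^k S$, showing that the identity $(G,\tau')\to(G,\sigma)$ is continuous on each generator and therefore on $(G,\tau')$. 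Thus $\sigma\subseteq\tau'$, so $\tau'$ is the finest group topology coarser than $\tau$ with $C\to 0$, which completes the construction.
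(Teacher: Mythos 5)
Your proposal follows the same overall architecture as the paper's proof --- translate $a$ to $0$, recursively thin $D$ to a set $C$ that escapes the determining compacts, topologize $S=C\cup\{0\}$ as a convergent sequence, equip the sets $F_i+\sum^kS$ with quotient topologies, and feed the resulting family to Lemma~\ref{komega} --- but your verification of the one substantial hypothesis, Hausdorffness of the quotients, takes a genuinely different route. The paper proves it by induction on the number $K$ of $S$-factors: it splits $C^K$ into the clopen pieces $(C_i^-)^K$ and $E_{j,l}$, factors the addition through a map $s_2$ shown to be injective (so the induced topology is a product topology), and handles the pure sums $\sum^KC$ by an indirect comparison with a linearly independent convergent sequence in an auxiliary boolean group. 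You instead impose the weaker separation condition $c_n\notin F_n+\langle c_0,\dots,c_{n-1}\rangle$ (for an increasing, sum-closed $\seq F_n$), extract from it the combinatorial fact that a sum of distinct elements of $C$ lying in $F_N$ uses only indices below $N$, and then verify directly that the fibre relation of each addition $F_i\times S^k\to G$ is closed: in a convergent net of coincidences, once the coordinates tending to $0$ have indices above the relevant bound they must cancel completely, so each coincidence degenerates to an equality of translates of points of $F_i$, which survives passage to the limit in $(G,\tau)$. I checked both the bounded-index claim and the closed-relation argument; they are sound, and this genuinely streamlines the proof --- no induction on $K$, no clopen decomposition, no auxiliary independent sequence (indeed your preliminary thinning of $D$ to an ${\mathbb F}_2$-independent set is harmless but never actually used).

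Three repairs are needed, all local. First, your justification of $\tau'\subseteq\tau$ is false as written: $F_i+\sum^kS$ is \emph{not} $\tau$-compact (already $S$ itself is an infinite $\tau$-closed discrete set together with $0$, since the convergent-sequence topology on $S$ is strictly coarser than its $\tau$-subspace topology). The correct argument is shorter: ${\cal K}'$ contains every $F_n$ with its original topology, so any $\tau'$-open set meets each $F_n$ in a relatively open set and is therefore $\tau$-open, because $\tau$ is determined by $\seq F_n$. Second, in the closedness argument replace sequences by nets: the $F_i$ need not be metrizable (in the paper's application they are not), though nothing in the argument changes. Third, your claim that compatibility on overlaps is automatic because an overlap of two generators ``is again a finite union of generators'' is unjustified; argue instead that for generators $X\subseteq Z$ the inclusion $X\to Z$ is continuous (it factors through the defining quotient of $X$, by the same coordinate-reshuffling observation you use for the addition maps), hence a closed embedding since $X$ is compact and $Z$ Hausdorff; thus any two generators sit as closed subspaces of a common larger one, which gives both closedness of intersections and agreement of the topologies on them. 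With these fixes your proof is complete.
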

\begin{proof}
Let $\seq F_n$ be a collection of compact subspaces of $G$, closed
under finite unions, sums, and intersections that determines the
topology of $G$ (in particular, such a family is always a cover for
$G$). Since translations are homeomorphisms, assume $a=0$ by
translating $D$ if necessary.

Let $D=\seq d_n$. Choose, by induction on $i\in\omega$, a sequence
$\seq n_i$ such that for $c_i=d_{n_i}$, $C_i=\set c_j:j\leq
i.\cup\{0\}$, and $m>i$
$$
(\cup_{j\leq i}F_j+\sum^iC_i)\cap(c_m+\cup_{j\leq
  i}F_j+\sum^iC_i)=\varnothing.
$$
Since $(\cup_{j\leq i}F_j+\sum^iC_i)-(\cup_{j\leq i}F_j+\sum^iC_i)$ is
compact, and $D$ is closed discrete, such
a choice of $n_i$ is possible. For convenience assume $c_0=0\in D$.

Put $C=\seq c_i$, $S_k=\sum^kC$, and consider the natural addition map
$s_{k,n}:C^k\times F_n\to S_k+F_n$ defined as
$s_{k,n}(c^1,\dots,c^k,x)=c^1+\cdots+c^k+x$. Introduce a natural topology on
$C$ such that $C\to0$ and view $s$ as a quotient map onto its image.
Suppose that with such a topology the image, $S_k+F_n$, is Hausdorff for
all $k<K$ and all $n\in\omega$. Choose an arbitrary $n\in\omega$ and
set $i=\max\{K,n\}$. To simplify the notation put $s=s_{K,n}$,
$S=S_K$. Note that the family $(C_i^-)^K$, $E_{j,l}$,
$j,l\leq i$ forms a clopen cover of $C^K$ where $C_i^-=(C\setminus
C_i)\cup\{0\}$ and
$E_{j,l}=C\times\cdots\times\{c_l\}\times\cdots\times C$ where the
$j$-th factor is equal to $\{c_l\}$ while the remaining $K-1$ factors are
$C$. Consider another addition map
$s^+:(C_i^-)^K\times(F_n+\sum^KC_i)\to \sum^KC_i^-+F_n+\sum^KC_i$. Note
that $s(C^K\times F_n)\subseteq
s^+((C_i^-)^K\times(F_n+\sum^KC_i))$. The map $s^+$ can be further written
as $s^+(c^1,\dots,c^K,x)=s_2(s_1(c^1,\dots,c^K),x)$ where both
$s_1:(C_i^-)^K\to \sum^KC_i^-$ and $s_2:(\sum^KC_i^-)\times(F_n+\sum^KC_i)$
are again additions.

Let $c'+x'=c''+x''$ where $c',c''\in\sum^KC_i^-$ and $x',x''\in
F_n+\sum^KC_i$. Choose (using the assumption that $G$ is boolean) some
representations $c'=c_1'+\cdots+c_p'$ and $c''=c_1''+\cdots+c_q''$
where $c_j'=c_{\nu(j)}$ and $c_j''=c_{\mu(j)}$, both $\nu$ and $\mu$ are
strictly increasing and $\nu(\cdot)>i$, $\mu(\cdot)>i$. Put
$r=\max\{p,q\}$. Show that $c'=c''$ by an induction on $r$. If
$\nu(p)\not=\mu(q)$, say $\nu(p)>\mu(q)$, then by the choice of
$c_{\nu(p)}$ the intersection $(c_{\nu(p)}+F_n+\sum^KC_{\nu(p-1)})\cap
(F_n+\sum^KC_{\mu(q)})$ is empty but $c'+x'\in
c_{\nu(p)}+F_n+\sum^KC_{\nu(p-1)}$ and $c''+x''\in
F_n+\sum^KC_{\mu(q)}$, contradicting $c'+x'=c''+x''$. Thus
$\nu(p)=\mu(q)$. If both $p>1$ and $q>1$,
replace $c'$ with $c'-c_p'$ and $c''$ with $c''-c_q''$ and apply the
inductive hypothesis. Note that $p=1$ if and only if $q=1$ by a similar
argument implying $c'=c''$.

It follows that $c'+x'=c''+x''$ if and only if
$(c',x')=(c'',x'')$ for any $(c',x'),
(c'',x'')\in(\sum^KC_i^-)\times(F_n+\sum^KC_i)$. Hence $s_2$ is one-to-one
and the (trivial) quotient topology it induces on its image is that of
a (Cartesian) product of $\sum^KC_i^-$ and $F_n+\sum^KC_i$. Note that $F_n+\sum^KC_i$ is
Hausdorff since $F_n$ is a compact subset of the group and $\sum^KC_i$ is
finite. The Hausdorffness of $\sum^KC_i^-$ (and the continuity of
$s_1$) can be established directly 
or by a shorter indirect argument given at the end of this proof.
It remains to show that $s$, restricted to each $(C_i^-)^K\times F_n$
and $E_{j,l}\times F_n$ is continuous in the topology induced by
$s_2$. The restriction of $s$ to $(C_i^-)^K\times F_n$ is the same as
$s^+$ restricted to the same set. Since $s^+$ factors through $s_1$
and $s_2$, both of which are continuous, the continuity of $s$ on
$(C_i^-)^K\times F_n$ follows.

The continuity of $s$ restricted to $E_{j,l}\times F_n$ would follow
from the continuity of the natural additive map
$s_3:C^{K-1}\times(F_n+c_l)\to \sum^{K-1}C+F_n+c_l\subseteq\sum^KC_i^-+F_n+\sum^KC_i$. To
establish the continuity of $s_3$, observe, similar to an argument
above, that $s_3(C^{K-1}\times(F_n+c_l))\subseteq
s_3'((C_i^-)^{K-1}\times(F_n+\sum^KC_i))$ where $s_3'$ is another
addition. Just as above, $s_3'$ can be written as
$s_3'(c^1,\dots,c^{K-1},x)=s_2'(s_1'(c^1,\dots,c^{K-1}),x)$ where both
$s_1':(C_i^-)^{K-1}\to \sum^{K-1}C_i^-$ and
$s_2':(\sum^{K-1}C_i^-)\times(F_n+\sum^KC_i)\to\sum^{K-1}C_i^-+F_n+\sum^KC_i\subseteq
\sum^KC_i^-+F_n+\sum^KC_i$
are additions. Identical 
to the proof for $s_2$, the map $s_2'$ is one-to-one. Therefore, its
continuity follows from the continuity of the embedding
$\sum^{K-1}C_i^-\subseteq\sum^KC_i^-$ where the topology of each sum
is the quotient topology induced by the addition (see
below for the proof). Thus $s_3'$ is continuous and it remains to show that $s_3$ is
continuous when the topology of its range $\sum^{K-1}C+F_n+c_l$ is
inherited from the quotient topology on
$\sum^{K-1}C_i^-+F_n+\sum^KC_i$ induced by $s_3'$. This follows from the induction
hypothesis applied to the addition $s':C^{K-1}\times
F_o\to\sum^{K-1}C+F_o$ where $o\in\omega$ is large enough so that
$F_n+\sum^KC_i\subseteq F_o$. Indeed, both $s_3$ and $s_3'$ are
restrictions of $s'$, the rest is the consequence of the uniqueness of
a compact Hausdorff topology.

To show the Hausdorffness of the range of each $g:C^K\to\sum^KC$, observe, that
(viewing $G$ as a vector space over ${\mathbb F}_2$) the set
$C\setminus\{0\}$ is linearly independent. It is also easy to show
that if $C''\subseteq G'$ is an infinite convergent sequence in some boolean group
$G'$ then $C'\subseteq C''$ for some infinite linearly independent set
$C'$. Any one-to-one correspondence between $C'$ and $C\setminus\{0\}$
is now easily
seen to induce the homeomorphism between $\sum^K(C'\cup\{0\})$ as a
subspace of $G'$ and $\sum^KC$ in the corresponding quotient
topology.

Finally, the family $\set s_{i,j}(C^i\times F_j):i,j\in\omega.$
satisfies all the properties of Lemma~\ref{komega} and is easily seen
to induce a topology on $G$ that satisfies the conclusion of the lemma.
\end{proof}

\begin{remark}\label{Rgroups}
The conditions on $G$ and $D$ in Lemma~\ref{seq} can be weakened to
requiring that $D$ contain a {\it $T$-sequence\/} (see \cite{PZ1} for
the definition and  various properties and applications of
$T$-sequences) and have the property 
that each $mD$ is a closed discrete subset of $G$ (in fact an even
weaker condition would suffice).
\end{remark}

Paper \cite{TB2} mentions without proof that any non closed
subgroup of a countable $k_\omega$ group is not sequential. Below we
present a proof of a generalization of this statement to uncountable groups.

\begin{lemma}\label{SFrech}
Let $G$ be a $k_\omega$ group such that $\psi(G)=\aleph_0$ and $G$ is
not Fr\'echet. Then $G$ contains a closed copy of $S(\omega)$.
\end{lemma}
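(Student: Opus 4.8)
The plan is to locate inside $G$ a free $S_2$-embedding whose vertex is the identity, and then to fold it into a sequential fan by dividing the branch points by their intermediate limits, using the group structure. The crux will be a single combinatorial requirement on the branches, and essentially all the hypotheses will be spent on arranging it.

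First I would extract a clean two-level configuration. Since $\psi(G)=\aleph_0$ and a compact Hausdorff space of countable pseudocharacter is metrizable, we may take a countable family $\seq F_n$ of \emph{metrizable} compacta, increasing and closed under finite products, inverses and unions, that determines the topology of $G$ (take the algebraic closure of an arbitrary determining family, as in the remark after Lemma~\ref{komega}). In particular $G$ is sequential and every convergent sequence lies, together with its limit, in some $F_n$. As $G$ is not Fr\'echet, $\so(G)\geq2$; since a sequentially closed subset of a sequential space is closed, there must be a set $A$ with $[A]_2\neq[A]_1$ (otherwise $[A]_1=[[A]_0]'$ would always be sequentially closed, hence equal to $\overline A$, forcing $\so(G)\leq1$). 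Fix $a\in[A]_2\setminus[A]_1$ and translate so that $a=1$. Then there are $a_i\to1$ with $a_i\in[A]'$, and for each $i$ a sequence $s(i,j)\to a_i$ with $s(i,j)\in A$; crucially $1\notin[A]'$.

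Next I would set $b_{i,j}=s(i,j)a_i^{-1}$. Continuity of the group operations gives $b_{i,j}\to1$ for each fixed $i$ (and $b_{i,j}\neq1$ since $s(i,j)\neq a_i$), so the $b_{i,j}$ form countably many sequences with common limit $1$; the goal is to make $Z=\set b_{i,j}:i,j\in\omega.\cup\{1\}$ a closed copy of $S(\omega)$. By the functional description of a free $S_2$-embedding in Section~\ref{TS}, everything reduces to one requirement on the branches: each set $\set i:s(i,j)\in F_n\text{ for some }j.$ is finite, i.e.\ each $F_n$ meets only finitely many branches. This condition transfers from the $s(i,j)$ to the $b_{i,j}$: the $a_i$ together with $1$ form a convergent sequence, hence lie in a single $F_{m_0}$, and since the family is closed under products, $b_{i,j}\in F_n$ forces $s(i,j)=b_{i,j}a_i\in F_nF_{m_0}\subseteq F_{n'}$ with $n'$ independent of $i$. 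Granting the requirement, any sequence converging to $1$ inside $Z$ is compact, hence lies in some $F_n$, hence is supported on finitely many branches; this is exactly what makes $1$ the only accumulation point of $Z$, makes each selection $\set b_{i,j}:j<f(i).$ closed discrete, and identifies the subspace topology on $Z$ with that of $S(\omega)$, with $Z$ closed in $G$.

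The main obstacle is therefore to choose the branches so that each $F_n$ meets only finitely many of them. I would build them by induction, arranging that the $k$-th chosen branch avoids $F_k$ altogether, so that $F_n$ is met only by the first $n$ branches. The induction succeeds because of the following point, which is the real heart of the lemma: for every $k$, infinitely many of the $a_i$ lie in $\overline{A\setminus F_k}$. Indeed, if all but finitely many failed this, then cofinitely many $a_i$ would lie in $\overline{A\cap F_k}$, a compact subset of $F_k$; but $a_i\to1$ would then place $1\in\overline{A\cap F_k}\subseteq[A]'$, contradicting $1\notin[A]'$. Given this, at stage $k$ I pick a not-yet-used index $i$ with $a_i\in\overline{A\setminus F_k}$ and replace its approximating sequence by one lying in $A\setminus F_k$, thinning to keep all chosen points pairwise distinct. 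Relabelling the chosen branches as $0,1,2,\dots$ produces the required free $S_2$-embedding, and the difference construction above then yields the closed copy of $S(\omega)$.
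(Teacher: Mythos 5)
Your overall strategy is essentially the paper's: use countable pseudocharacter to make each $F_n$ first countable, extract from non-Fr\'echetness a two-level configuration, thin it so that each $F_n$ meets only finitely many branches (i.e.\ obtain a free $S_2$-embedding), and then convert the result into a closed copy of $S(\omega)$. The differences are mostly cosmetic: the paper starts from the standard non-Fr\'echet witness and splits into two cases (if $a_i=v$ infinitely often the fan is already present; otherwise one has a closed copy of $S_2$ and cites \cite{N} for the passage from a closed $S_2$ to a closed $S(\omega)$ in a group), whereas you start from $1\in[A]_2\setminus[A]_1$, which forces $a_i\neq 1$ and kills the case split, and you inline the Nyikos step via the division trick $b_{i,j}=s(i,j)a_i^{-1}$ --- the same trick the paper itself uses in the proof of Proposition~\ref{Sw}. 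That part of your argument, including the transfer of the finiteness condition through $s(i,j)=b_{i,j}a_i$, is correct.

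However, one step fails as written, and it is precisely the kind of inference the lemma is about. At stage $k$ you pick $i$ with $a_i\in\overline{A\setminus F_k}$ and then ``replace its approximating sequence by one lying in $A\setminus F_k$'': membership in the \emph{closure} of $A\setminus F_k$ does not produce a sequence of $A\setminus F_k$ converging to $a_i$, because $G$ is not Fr\'echet --- that is the hypothesis. The repair is easy and uses only your own tools: run the dichotomy with sequential closures. Since every sequence in $(A\setminus F_k)\cup(A\cap F_k)$ has a subsequence inside one of the two pieces, $[A]'=[A\setminus F_k]'\cup[A\cap F_k]'$; if only finitely many $a_i$ lay in $[A\setminus F_k]'$, then cofinitely many would lie in $[A\cap F_k]'\subseteq\overline{A\cap F_k}$, a compact set, so $a_i\to 1$ gives $1\in\overline{A\cap F_k}$, and first countability of $F_k$ then yields $1\in[A\cap F_k]'\subseteq[A]'$, a contradiction. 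So infinitely many $a_i$ lie in $[A\setminus F_k]'$, which is exactly what the induction needs. Two smaller points: a compact Hausdorff space of countable pseudocharacter is first countable (character and pseudocharacter coincide on compacta, as the paper notes), not metrizable --- the double arrow space is a counterexample --- but first countability is all you actually use; and thinning the $s(i,j)$ to be pairwise distinct does not make the $b_{i,j}$ pairwise distinct across branches, so $Z$ need not be a bijectively indexed fan. This too is repairable: each $b$-branch together with $1$ is compact, hence contained in some $F_m$, which by your finiteness condition meets only finitely many other $b$-branches; so one may pass to an infinite subfamily of pairwise disjoint branches and let $Z$ be their union with $\{1\}$.
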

\begin{proof}
Let $\seq F_n$ be a family of compact subspaces that determines the
topology of $G$. Since $\psi(G)=\aleph_0$ each $F_n$ is first
countable and $G$ is sequential. Since $G$ is not Fr\'echet, there
exists an injective map $s:\omega^2\to G$ such that $s(i,j)\to a_i$
where $a_i\to v$ and for any $f:\omega\to\omega$ the sequence
$s(i,f(i))\not\to v$. Since each $F_n$ is first countable, the set
$\set i:s(i,j)\in F_n\hbox{ for infinitely many $j\in\omega$}.$ is
finite for every $n\in\omega$. By thinning out $s$ if necessary, one
can assume that each set of the form $\set i:s(i,j)\in F_n\hbox{ for
some $j\in\omega$}.$ is finite. If $a_i=v$ for infinitely many
$i\in\omega$, it is easy to see that there is a closed copy of
$S(\omega)$ in $\overline{s(\omega^2)}=s(\omega^2)\cup\seq
a_i\cup\{v\}$. Otherwise the same set contains a closed copy of $S_2$
so (see \cite{N}) $G$ contains a closed copy of $S(\omega)$.
\end{proof}

Let us mention without proof a result about the sequential order of
$k_\omega$-groups of countable pseudocharacter. The proof is an
extension of the techniques presented here (see also Lemma~\ref{SO} below). 
\begin{lemma}\label{so}
Let $G$ be a $k_\omega$ group such that $\psi(G)=\aleph_0$ and $G$ is
not Fr\'echet. Then $\so(G)=\omega_1$.
\end{lemma}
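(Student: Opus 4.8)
The plan is to exhibit a closed copy of the Arkhangel'skii--Franklin space $S_\omega$ inside $G$ and then invoke the monotonicity of $\so$ under closed subspaces. Indeed, if $Y\subseteq G$ is closed then for every $A\subseteq Y$ one has $\overline{A}\subseteq Y$, and a straightforward induction shows that $[A]_\alpha$ computed in $Y$ agrees with $[A]_\alpha$ computed in $G$; hence $\so(Y)\le\so(G)$. Since $\so(S_\omega)=\omega_1$ while $\so(G)\le\omega_1$ always holds, a closed copy of $S_\omega$ forces $\so(G)=\omega_1$, which is the assertion.

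To produce such a copy I would first extract the combinatorial content of the failure of the Fr\'echet property exactly as in the proof of Lemma~\ref{SFrech}: since $\psi(G)=\aleph_0$ each $F_n$ is first countable and $G$ is sequential, and non-Fr\'echetness supplies a free $S_2$-embedding, in particular a nontrivial convergent sequence $a_i\to v$ with all terms distinct. Translating so that $v=1$, I would thin $\langle a_i\rangle$ to a $T$-sequence (cf.\ Remark~\ref{Rgroups} and the construction in the proof of Lemma~\ref{seq}) so that finite products of the $a_i$ are `free', i.e.\ distinct increasing tuples yield distinct products. In the non-abelian case the extra control of conjugates needed to keep the relevant families stable is supplied by the separable reductions of Lemma~\ref{stabres}.

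With this data in place, define $\varphi:[\omega]^{<\omega}\to G$ by sending $\varnothing$ to $1$ and an increasing tuple $\{i_1<\dots<i_k\}$ to the product $a_{i_1}\cdots a_{i_k}$. Continuity of multiplication together with $a_m\to 1$ gives $\varphi(s\cup\{m\})=\varphi(s)\,a_m\to\varphi(s)$ as $m\to\infty$, which is precisely the convergence defining the topology of $S_\omega$; thus the subspace topology on $\varphi([\omega]^{<\omega})$ refines that of $S_\omega$. The freeness of the $T$-sequence is what forbids any further, spurious convergence, so the two topologies coincide and $\varphi$ is a homeomorphic embedding of $S_\omega$.

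The main obstacle is the closedness of the image. Since the topology of $G$ is determined by $\langle F_n\rangle$, I must show that $\varphi([\omega]^{<\omega})\cap F_n$ is compact for every $n$: genuine convergence in the image only occurs down a branch of the tree to an image point, whereas any sequence of products that escapes to infinity in the tree must eventually leave each $F_n$ and so cannot converge inside it. Establishing this requires the same inductive bookkeeping as in the proof of Lemma~\ref{seq}, where the sets $\sum^kC+F_n$ were shown to carry a compact Hausdorff topology with unique representations; here one runs the analogous induction on the length of the products, thinning so that for every $n$ only a compact piece of the copy meets $F_n$. This simultaneous control of all product-lengths against all of the $F_n$ is the technical heart of the argument, and it is exactly the extension of the present techniques alluded to before the statement (see also Lemma~\ref{SO}).
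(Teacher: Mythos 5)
Your reduction is sound as far as it goes: $\so$ is monotone under passing to closed subspaces, $\so(S_\omega)=\omega_1$, so a closed copy of $S_\omega$ in $G$ would finish the proof. (For the record, the paper states Lemma~\ref{so} without any proof, so there is nothing to compare against line by line; but the remark following Proposition~\ref{SO} indicates that a closed embedding of $S_\omega$ is indeed the intended route.) The construction you give of that copy, however, fails at its central step, and for a reason that no thinning of the sequence can repair. You set $\varphi(\{i_1<\dots<i_k\})=a_{i_1}\cdots a_{i_k}$ and claim that ``the freeness of the $T$-sequence is what forbids any further, spurious convergence, so the two topologies coincide.'' Freeness is an algebraic condition, while the spurious convergence here is forced by topology alone: multiplication is jointly continuous and $a_n\to1$, so for any $i_k,j_k\to\infty$ one gets $\varphi(\{i_k,j_k\})=a_{i_k}a_{j_k}\to 1=\varphi(\varnothing)$. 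In $S_\omega$ no such diagonal sequence of second-level points converges to the root: given $(i_k,j_k)$ with $i_k<j_k$ and $i_k\to\infty$, put $f(i)=1+\sup\{j_k:i_k=i\}$ (a finite supremum) and note that $U=\{s:|s|\neq2\}\cup\{\{i,j\}:i<j,\ j\geq f(i)\}$ is open, contains $\varnothing$, and misses every $\{i_k,j_k\}$. Hence in every topological group the subspace topology on $\varphi([\omega]^{<\omega})$ is strictly coarser than the $S_\omega$ topology already at tree levels $\leq2$; your map is never a homeomorphic embedding of $S_\omega$, nor even of $S_2$.

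This is exactly the phenomenon that the paper's notion of a free $S_2$-embedding is designed to circumvent: the witnesses $s(i,j)$ cannot be products along branches of a single null sequence; they must be chosen so that every diagonal $\{s(i,f(i)):i\in\omega\}$ is closed discrete, and manufacturing such points is the delicate content of Lemma~\ref{fr} and Proposition~\ref{Sw} (elements of the form $s'(i_n,j)\cdot(s'(i_n,k))^{-1}$ chosen to escape prescribed compact sets $F_n$), following \cite{N}. Your appeal to the bookkeeping of Lemma~\ref{seq} is also misplaced: that lemma builds a \emph{new, coarser} group topology on $G$ in which prescribed sums are compact, whereas here the topology of $G$ is given and what must be controlled is the subspace topology it induces, about which that bookkeeping says nothing. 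A workable argument has to produce the high sequential order level by level — either a genuine closed copy of $S_\omega$ assembled by iterating the free-$S_2$-embedding machinery against the compact sets $F_n$ and the previously built levels, or directly, for each $\alpha<\omega_1$, sets witnessing $[A]_\alpha\neq\overline{A}$ (for instance by exploiting the fact that sums of witnesses of orders $\alpha$ and $\beta$ witness higher order, with the $k_\omega$ structure and $\psi(G)=\aleph_0$ handling limit stages). That is the ``extension of the techniques presented here'' the paper alludes to, and it is precisely the part your proposal leaves unaddressed.
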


It turns out that a $k_\omega$ group can be `reduced' by taking an
appropriate quotient.
\begin{lemma}\label{Sresolve}
Let $G$ be a separable sequential non Fr\'echet $k_\omega$ group. Then there
exists a closed normal subgroup $H\subseteq G$ of $G$ such that $G/H$
is not Fr\'echet and $\psi(G/H)=\aleph_0$.
\end{lemma}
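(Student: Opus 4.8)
The plan is to produce a quotient $G/H$ of countable pseudocharacter that remains non-Fréchet, by building a stable family $\mathcal U$ of open sets whose intersection $H$ collapses the character of $G$ to $\aleph_0$ while carefully preserving a single witnessing configuration to non-Fréchetness. Since $G$ is separable, fix a countable dense set $E\subseteq G$. The key structural inputs are: (i) the quotient by the intersection of a stable family is Hausdorff with a base of cardinality $|\mathcal U|$ (from the remarks preceding Lemma~\ref{stabres}), so taking $|\mathcal U|=\aleph_0$ forces $\psi(G/H)=\aleph_0$; (ii) Lemma~\ref{hquot}, which guarantees that open maps do not raise the sequential order, so that a non-Fréchet witness in $G$ has a chance of surviving; and (iii) Lemma~\ref{stabres}, which lets me extend any stable family while controlling the images of closures of countably many sets along $E$.

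First I would extract a witness to non-Fréchetness: since $G$ is separable, sequential, and not Fréchet, there is a set $A\subseteq G$ with $v\in\overline A\setminus[A]'$, i.e.\ a point in the closure of $A$ that is not a limit of any sequence from $A$. Using separability I can take $A$ and its relevant closures to be built from the countable dense set $E$, so that the whole obstruction is ``coded'' by countably many subsets of $G$. I would then assemble a countable family $\mathcal K$ of compact pieces (drawn from the $k_\omega$ cover $\seq F_n$) together with the countably many sets needed to express ``$v\in\overline A$ but no sequence from $A$ converges to $v$,'' and feed $\mathcal K$ and $E$ into Lemma~\ref{stabres}. This yields a countable stable family $\mathcal U$ with $H=\cap\mathcal U$ normal and closed, the quotient map $p$ injective on $E$, and $p(\overline K\cap E)=\overline{p(K)}\cap p(E)$ for each $K\in\mathcal K$. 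The pseudocharacter conclusion $\psi(G/H)=\aleph_0$ is then immediate from $|\mathcal U|=\aleph_0$.

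The substantive step is to verify that $G/H$ is still \emph{not} Fréchet. The danger is that quotienting identifies points of $A$ with each other or with $v$ in a way that manufactures a convergent sequence, or that it merges formerly-separated compact pieces so that $v$ acquires a sequential neighborhood. The commutation property $p(\overline K\cap E)=\overline{p(K)}\cap p(E)$ is exactly engineered to prevent the first failure: it certifies that closures computed in $G$ along $E$ are faithfully transported to $G/H$, so $p(v)\in\overline{p(A)}$ survives while no sequence from $p(A)$ limits to $p(v)$ — any such sequence would, by openness of $p$ and the closure-preservation, pull back to a convergent sequence in $G$ witnessing $v\in[A]'$, contradicting the choice of $A$. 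I would phrase this as: if $p(v)\in[p(A)]'$ in $G/H$, then lifting the convergent sequence through the open map $p$ (using that $p$ is injective on the dense set $E$ and preserves the relevant closures) produces a convergent sequence in $G$, a contradiction.

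The main obstacle I anticipate is the lifting argument in the non-abelian generality — ensuring that the single witnessing structure really is encoded by countably much data and that Lemma~\ref{stabres} can be invoked to protect \emph{all} of it simultaneously, rather than just one set at a time. Concretely, the subtlety is that ``no sequence from $A$ converges to $v$'' is a statement quantifying over all sequences, and I must reduce it to a countable amount of separation data (finitely or countably many pairs $(e,K)$ with $e\notin\overline K$) that Lemma~\ref{stabres} can enforce. Once non-Fréchetness is reduced to preservation of a countable family of ``$v\notin\overline{A\cap(\text{tail})}$''-type closure conditions, each of which Lemma~\ref{stabres} can handle, the argument closes; identifying the right finite/countable skeleton of separations is where the care is needed, and I would spend most of the proof pinning that down.
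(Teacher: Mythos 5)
Your setup (countable stable family via Lemma~\ref{stabres}, hence $\psi(G/H)=\aleph_0$) is the same as the paper's first step, but the core of your argument --- direct preservation of a witness $(A,v)$ to non-Fr\'echetness --- has a genuine gap, located exactly where you said the care would be needed. The quotient $G/H$ is $k_\omega$, determined by the compacta $p(F_n)$, and each $p(F_n)$ is \emph{first countable} (compact plus countable pseudocharacter). Consequently, a sequence from $p(A)$ converging to $p(v)$ exists if and only if $p(v)\in\overline{p(A)\cap p(F_n)}$ for some $n$; so to keep the witness alive you must reduce ``no sequence from $A$ converges to $v$'' to the countably many separation facts $v\notin\overline{A\cap F_n}$, $n\in\omega$. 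But in $G$ the compacta $F_n$ need \emph{not} be first countable, so ``no sequence from $A$ converges to $v$'' does not imply $v\notin\overline{A\cap F_n}$: nothing prevents the whole witnessing configuration from sitting inside a single $F_n$ with $v\in\overline{A\cap F_n}$. In that case continuity gives $p(v)\in\overline{p(A\cap F_n)}$, and first countability of $p(F_n)$ then \emph{manufactures} a sequence from $p(A)$ converging to $p(v)$; no stable family can prevent this, because Lemma~\ref{stabres} only transports separations $e\notin\overline{K}$ that already hold in $G$. Your lifting step is unjustified for the same reason: open quotient homomorphisms of non-metrizable groups do not lift convergent sequences --- turning closed discrete subsets of $G$ into convergent sequences of a coarser or quotient topology is precisely the mechanism the rest of this paper runs on (Lemmas~\ref{seq} and~\ref{vertica}) --- and Lemma~\ref{hquot} only says open maps do not raise sequential order, not that sequences pull back. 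Finally, producing a witness with the required freeness property ($v\notin\overline{A\cap F_n}$ for all $n$, i.e.\ essentially a free $S_2$-embedding into $G$) is exactly what Lemma~\ref{SFrech} and Proposition~\ref{Sw} supply, but the former needs $\psi(G)=\aleph_0$, which $G$ does not have, and the latter is proved \emph{using} Lemma~\ref{Sresolve}; so within the paper's development this route is circular.

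The paper's proof avoids witness preservation entirely. It picks a countable non-Fr\'echet subgroup $G'\subseteq G$ (generated by a countable witness), applies Lemma~\ref{stabres} with $E=G'$ and $\mathcal{K}=\{F_n\}$ so that $p$ is one-to-one on $G'$ and $p(G'\cap F_n)=p(G')\cap p(F_n)$, and then argues by contradiction at a structural level: if $G/H$ were Fr\'echet then, as a Fr\'echet sequential $k_\omega$ group, it would be locally compact; pulling a compact neighborhood $\overline{U}$ back to $V=p^{-1}(U)\cap\overline{G'}$, any infinite closed discrete $D'\subseteq V\cap G'$ meeting each $F_n$ finitely would map (by injectivity on $G'$ and the displayed commutation) onto an infinite closed discrete subset of the compact $\overline{U}$, which is absurd; hence $\overline{G'}$ is locally compact, therefore metrizable and Fr\'echet, contradicting the choice of $G'$. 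Non-Fr\'echetness of the quotient thus falls out of the dichotomy ``Fr\'echet sequential $k_\omega$ group $\Rightarrow$ locally compact'' rather than from tracking any particular configuration, which is what makes the argument immune to the failure described above. To salvage your approach you would first have to prove that $G$ itself admits a free $S_2$-embedding, a fact the paper can only reach after this lemma.
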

\begin{proof}
Suppose $G$ is not Fr\'echet. Then there exists a countable subgroup
$G'\subseteq G$ that is not Fr\'echet. Pick a countable stable family
${\cal U}$ of open subsets of $G$ such that $p$ is
one-to-one on $G'$ and $p(G'\cap F_n)=p(G')\cap p(F_n)$ for every
$n\in\omega$ where $p:G\to G/H$ is a natural quotient map and
$H=\cap{\cal U}$. Note that $\psi(G/H)=\aleph_0$. If $G/H$ is
Fr\'echet, as a sequential $k_\omega$ group it is locally compact so
let $1\in U\subseteq G/H$ be an open subset of $G/H$ such that
$\overline U$ is compact. Now $V=p^{-1}(U)\cap\overline{G'}$ is an
open neighborhood of $1$ in $\overline{G'}$. If $\overline V$ is not
(countably) compact there exists a closed infinite discrete subset
$D\subseteq\overline{V}$. Note that for every $n\in\omega$ the
intersection $F_n\cap D$ is finite and $V\cap G'$ is dense in
$\overline V$ so one can pick an infinite subset $D'\subseteq V\cap
G'$ such that every intersection $F_n\cap D'$ is finite. Hence every
$p(F_n)\cap p(D')=p(F_n\cap D')$ is finite and $p(D')$ is
infinite. Therefore $p(D')$ is an infinite closed discrete subset of
$\overline U$, a contradiction. Thus $\overline{G'}$ is locally
compact and thus Fr\'echet, contradicting the choice of $G'$.
\end{proof}

As a corollary of the lemma above and Lemma~\ref{so} one shows
\begin{proposition}\label{SO}
Let $G$ be a sequential $k_\omega$ group. Then either $G$ is locally
compact (and therefore metrizable) or $\so(G)=\omega_1$.
\end{proposition}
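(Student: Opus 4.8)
The plan is to argue by the dichotomy ``$G$ is Fr\'echet or not'', showing that the Fr\'echet alternative yields local compactness and metrizability, while the non-Fr\'echet alternative forces $\so(G)=\omega_1$ by transporting the sequential order up from a carefully chosen quotient.

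First I would dispose of the case $\so(G)\le1$, i.e.\ $G$ Fr\'echet. Then $G$ is a Fr\'echet sequential $k_\omega$ group, and exactly as in the proof of Lemma~\ref{Sresolve} such a group is locally compact (pick an open $U\ni1$ with $\overline U$ compact). A locally compact sequential group is metrizable, a consequence of the compact case quoted in the introduction, and this is precisely the parenthetical assertion of the statement, so this alternative is settled.

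Now suppose $G$ is not Fr\'echet; the goal becomes $\so(G)=\omega_1$. Here the engine is Lemma~\ref{Sresolve}, which hands us a closed normal subgroup $H\subseteq G$ with $G/H$ non-Fr\'echet and $\psi(G/H)=\aleph_0$. Since $G/H$ is again a $k_\omega$ group of countable pseudocharacter that fails to be Fr\'echet, Lemma~\ref{so} applies and gives $\so(G/H)=\omega_1$. The canonical quotient homomorphism $p\colon G\to G/H$ is open, so by Lemma~\ref{hquot} it cannot raise the sequential order; hence $\omega_1=\so(G/H)\le\so(G)$, and as $\so$ never exceeds $\omega_1$ we get $\so(G)=\omega_1$. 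Together with the previous paragraph this is the claimed dichotomy.

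The step I expect to be the main obstacle is the invocation of Lemma~\ref{Sresolve}, whose hypotheses include separability while the Proposition assumes none. In the abelian case this is harmless: the remarks preceding Lemma~\ref{stabres} show that a stable family of open sets can always be refined below a prescribed neighborhood of $0$ in an abelian group, so the construction in Lemma~\ref{Sresolve} runs verbatim without any separability assumption. In the general (non-abelian) setting I would instead extract a separable non-Fr\'echet witness: from a free $S_2$-embedding $s$ produced by the failure of the Fr\'echet property, the closed copy $\overline{s(\omega^2)}$ of $S_2$ (or of $S(\omega)$) generates a separable subgroup, and I would aim to replace $G$ by a closed separable $k_\omega$ subgroup $G'$ carrying this configuration, so that applying the argument above to $G'$ and using $\so(G')\le\so(G)$ for closed subspaces yields $\so(G)\ge\so(G')=\omega_1$. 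Securing such a closed separable $k_\omega$ subgroup with the correct subspace topology is exactly the delicate point where the bulk of the care is required.
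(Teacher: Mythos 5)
Your proposal is correct and takes essentially the same route as the paper's own (one-line) proof: the Fr\'echet alternative gives local compactness and metrizability, and the non-Fr\'echet alternative chains Lemma~\ref{Sresolve}, Lemma~\ref{so}, and the fact that the open quotient map cannot raise sequential order (Lemma~\ref{hquot}). The only difference is that you explicitly repair the separability hypothesis of Lemma~\ref{Sresolve} --- harmless in the abelian case by the remarks before Lemma~\ref{stabres}, and otherwise handled by passing to the closure of a countable non-Fr\'echet subgroup, which is a closed separable sequential $k_\omega$ group with $\so(\overline{G'})\le\so(G)$ --- a point the paper's proof passes over in silence.
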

\begin{proof}
Note that an open map cannot raise the sequential order by Lemma~\ref{hquot}
then apply Lemmas~\ref{so} and~\ref{Sresolve}.
\end{proof}
It is possible, in fact, to obtain a closed embedding
$S_\omega\subseteq G'$ for any dense subgroup $G'$ in this
case. Lemma~\ref{so} and Proposition~\ref{SO} will
not be used in this paper.
\begin{proposition}\label{Sw}
Let $G$ be a topological group and $G'\subseteq G$ be such that
$\overline{G'}$ is a sequential non Fr\'echet $k_\omega$ group. Then
$G'$ contains a copy of $S(\omega)$ closed in $G$.
\end{proposition}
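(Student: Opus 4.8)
The plan is to reduce the statement to a concrete embedding criterion and then to meet that criterion by combining a difference trick inside $G'$ with the quotient machinery of Lemma~\ref{Sresolve}. First, since $\overline{G'}$ is closed in $G$, any subset of $G'$ that is closed in $\overline{G'}$ is already closed in $G$; so I may replace $G$ by $\overline{G'}$ and assume $G'$ is dense in the sequential non-Fr\'echet $k_\omega$ group $G$, fixing a determining family $\langle F_n : n\in\omega\rangle$ of compacta and, after a translation, taking the vertex to be $1$. The goal then reduces to the following criterion: it suffices to produce sequences $\langle r_{i,j} : j\in\omega\rangle$ of distinct points of $G'\setminus\{1\}$, $i\in\omega$, such that (a) $r_{i,j}\to 1$ in $G$ as $j\to\infty$ for every $i$, and (b) for each $n$ the set $\{i : r_{i,j}\in F_n\text{ for some }j\}$ is finite. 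Condition (b) forces every set of the form $\{r_{i,j} : j\le f(i)\}$ to meet each $F_n$ in a finite set, hence to be closed discrete, so that $Y=\{r_{i,j} : i,j\}\cup\{1\}$ is closed and carries exactly the fan topology, while (a) supplies the neighborhoods of the vertex; thus $Y$ is a copy of $S(\omega)$ lying in $G'$ and closed in $G$.

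The genuine convergence in (a) is where density of $G'$ is exploited through a difference trick. If $G'$ is a proper subgroup, it cannot be sequentially closed: otherwise $[G']_\alpha=G'$ for every $\alpha$, and sequentiality of $G$ would force $G'=[G']_{\so(G)}=\overline{G'}=G$. Hence there are $g_n\in G'$ with $g_n\to g$ for some $g\in G\setminus G'$, and then $g_{2k}g_{2k+1}^{-1}\to 1$ while all its terms lie in $G'$. This manufactures genuinely $G$-convergent null sequences inside $G'$ even though the compacta $F_n$ may be non-metrizable (so that $G'$ need not otherwise contain any sequence converging to $1$); by homogeneity the same device approaches any prescribed point of $G'$. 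In the purely closed case $G'=\overline{G'}$ condition (a) is instead free: one takes the rows directly from a non-Fr\'echet witness $s(i,j)\to a_i$ by setting $r_{i,j}=s(i,j)a_i^{-1}\to 1$, so that only (b) has to be arranged.

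For the escaping condition (b) — which encodes non-Fr\'echetness and cannot hold in a metrizable group — I would pass to a quotient of countable pseudocharacter. After passing, if necessary, to the closure of a countable subgroup of $G'$ chosen to remain non-Fr\'echet (so that the reduction keeps the target fan inside $G'$; this reduction is automatic in the abelian case, where only a countable stable family is required), Lemma~\ref{Sresolve} yields an open homomorphism $q\colon G\to G/H$ with $G/H$ non-Fr\'echet and $\psi(G/H)=\aleph_0$. Thus $G/H$ has metrizable compacta and, by Lemma~\ref{SFrech}, a closed copy of $S(\omega)$ whose rows $z_{i,j}\to q(1)$ satisfy the quotient form of (b): each compact $q(F_n)$ meets only finitely many rows. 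Using Lemma~\ref{stabres} I would build $H$ so that on a pre-selected countable set $E\subseteq G'$ carrying the candidate rows the map $q$ is injective and preserves the relevant closures. Then (b) reflects downward: if $r_{i,j}\in F_n$ then $q(r_{i,j})=z_{i,j}\in q(F_n)$, so only finitely many rows meet $F_n$.

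It remains to assemble the two constructions, and this is where I expect the main obstacle. I would choose the rows to be difference-trick null sequences in $G'$, guaranteeing (a), while arranging that their $q$-images trace out the escaping fan of the previous paragraph, guaranteeing (b); the closure-preservation of Lemma~\ref{stabres} on $E$ is the bridge that makes these two demands compatible and lets the $k_\omega$-control computed in $G/H$ certify closedness in $G$. The hard point is precisely this compatibility: convergence of the rows in $G/H$ lifts only to convergence in the coarser $H$-topology on $G$, and never by itself to convergence in $G$ (since $G$ need not be first countable at $1$), so the genuine $G$-convergence must be supplied from below by the difference trick and then steered, fiber by fiber, so as to realize the prescribed images. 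Engineering difference-trick null sequences whose images follow the quotient fan — interleaving the two inductions through Lemma~\ref{stabres} — is the crux of the argument.
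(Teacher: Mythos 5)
Your reduction (replacing $G$ by $\overline{G'}$, then by the closure of a countable non-Fr\'echet piece), your closedness criterion (a)--(b) for the fan, and your inventory of tools (Lemmas~\ref{Sresolve}, \ref{SFrech}, \ref{stabres}, plus a difference trick) all match the paper's proof. But the step you yourself flag as ``the crux'' --- producing rows that converge in $G$ while their $q$-images realize the escaping fan --- is left unresolved, and the specific plan you sketch for it is self-defeating rather than merely incomplete. If $q$ is injective on the countable set $E$ carrying your rows and you demand $q(r_{i,j})=z_{i,j}$, then each $r_{i,j}$ is forced to be the \emph{unique} lift of $z_{i,j}$; there is no freedom left to ``supply the genuine $G$-convergence from below,'' and the unique lifts of a quotient fan in general do not converge in $G$ --- that is the whole difficulty. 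There is a second gap: Lemma~\ref{SFrech} produces a closed copy of $S(\omega)$ somewhere in $G/H$, but your final fan must lie in $G'$, so the quotient witness must be located inside $q(G')$ or approximated from it, and your outline never addresses this.

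The paper resolves both points by running the construction in the opposite direction. First, a dichotomy places the witness correctly: either $p(G')$ itself admits an injective $s:\omega^2\to p(G')$ with $s(i,j)\to a_i$, $a_i\to v$, $s(i,f(i))\not\to v$ (a free $S_2$-embedding after thinning), or else every point of $\overline{p(G')}$ is a limit of a sequence from $p(G')$, in which case the fan from Lemma~\ref{SFrech} can be traded for one lying in $p(G')$. Second, convergence in $G$ is not injected by a separate trick but \emph{extracted} from the closure-preservation of Lemma~\ref{stabres}: the unique lifts $s'(i,j)\in G'$ cannot form closed discrete rows, because $p(G'\cap F_n)=p(G')\cap p(F_n)$ forces some compact $F_n$ to contain infinitely many points of a row, so after thinning $s'(i,j)\to a_i'\in p^{-1}(a_i)$ (typically $a_i'\notin G'$). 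Only then does the difference trick enter, applied \emph{within each row}: $s''(n,i)=s'(i_n,j_i)\cdot(s'(i_n,k_i))^{-1}$ lies in $G'$ and converges to $1$ automatically by joint continuity, since it equals $\bigl(s'(i_n,j_i)(a_{i_n}')^{-1}\bigr)\cdot\bigl(a_{i_n}'(s'(i_n,k_i))^{-1}\bigr)$, a product of two null sequences; the indices $j_i,k_i$ are then chosen so that row $n$ avoids $\cup_{l\leq n}F_l$, which yields your condition (b) directly. Note in particular that the $q$-images of the final fan are the differences $s(i_n,j_i)s(i_n,k_i)^{-1}$, not the quotient fan itself --- exactly the flexibility that your steering demand $q(r_{i,j})=z_{i,j}$ forbids.
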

\begin{proof}
Since $G''=\overline{G'}$ is not Fr\'echet there exists a countable
$G'''\subseteq G'$ such that $\overline{G'''}$ is not Fr\'echet so we
may assume that $G'$ is countable and $G=\overline{G'}$. Using Lemma~\ref{Sresolve} 
and Lemma~\ref{SFrech}, find a closed subgoup $H\subseteq G$ such that
$\psi(G/H)=\aleph_0$, $p(G'\cap F_n)=p(G')\cap p(F_n)$ for every
$n\in\omega$, and $G/H$ is not Fr\'echet.

Suppose first there exists an injective map $s:\omega^2\to p(G')$ such
that $s(i,j)\to a_i$ for some $a_i\to v$ and $s(i,f(i))\not\to v$ for
any $f:\omega\to\omega$. Just as in Lemma~\ref{SFrech} one can thin out $s$
if necessary to assume that $s$ is a free $S_2$-embedding into
$G/H$. Let $s'(i,j)$ be the only point in $p^{-1}(s(i,j))\cap
G'$. The set $\set s'(i,j):j\in\omega.$ cannot be closed discrete in
$G$. Indeed, for some $n\in\omega$ the set $p(F_n)\cap\set
s(i,j):j\in\omega.=p(F_n\cap\set s'(i,j):j\in\omega.)$ is
infinite. Thinning $s$ again, if necessary one may assume that
$s'(i,j)\to a_i'\in p^{-1}(a_i)$.

Given $n\in\omega$ find an $i_n\in\omega$ such that $p(\cup_{i\leq
n}F_i)\cap\set s(i_n,j):j\in\omega.=\varnothing$.
Note that $G'\ni s'(i_n,j)\cdot
(s'(i_n,k))^{-1}\to s'(i_n,j)\cdot (a_{i_n}')^{-1}$ and $s'(i_n,j)\cdot
(a_{i_n}')^{-1}\to0$ and choose strictly increasing sequences
$k_i,j_i\in\omega$ such that
$s''(n,i)=s'(i_n,j_i)\cdot(s'(i_n,k_i))^{-1}\not\in\cup_{l\leq n}F_l$
and $s'':\omega^2\to G'$ is injective. Observe that $s''(i,j)\to0$ as
$j\to\infty$ and each set $\set i:s''(i,j)\in F_n\hbox{ for some
$j\in\omega$}.$ is finite. Thus $s''(\omega^2)\cup\{0\}$ is a closed
copy of $S(\omega)$ in $G'$.

If such $s$ does not exist, it follows that for every
$a\in\overline{p(G')}$ there exists a sequence $\seq c_n\subseteq
p(G')$ such that $c_n\to a$. Using Lemma~\ref{SFrech}
find a closed copy of $S(\omega)$ in $G/H$. We can assume that the map
$s:\omega^2\to G/H$ that witnesses this embedding is such that
$s(i,j)\to0$ as $j\to\infty$ and each $\set i:s(i,j)\in p(F_n)\hbox{
for some $j\in\omega$}.$ is finite. As before find an $i_n\in\omega$
such that $p(F_n)\cap\set s(i_n,j):j\in\omega.=\varnothing$. Pick a
point $c_i(j)\in p(G')\setminus p(\cup_{l\leq n}F_l)$ for each
$i,j\in\omega$ so that $c_i(j)\to s(i_n,j)$. By our assumption there
exist strictly increasing sequences $k_i,j_i\in\omega$ such that
$s'(n,i)=c_{k_i}(j_i)\to0$. By the choice of $i_n$ we may assume that
$s'(\omega^2)\cup\{0\}$ is a closed copy of $S(\omega)$ in
$p(G')$. Repeating the argument of the previous paragraph verbatim,
one can find a closed copy of $S(\omega)$ in $G'$.
\end{proof}

\begin{lemma}\label{subD}
Let $G'\subseteq G$ be a subgoup of a sequential group $G$ which is
not closed in $G$. Then $G'$ contains a closed copy of $D_\omega$.
\end{lemma}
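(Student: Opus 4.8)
The plan is to distil from the failure of $G'$ to be closed a single nontrivial convergent sequence and then to form the ``square of differences'' of that sequence, arranging matters so that every extraneous accumulation point of this square escapes $G'$. Since $\overline{G'}\neq G'$, sequentiality of $G$ applied to $A=G'$ furnishes a set $C\subseteq G'$ and a point $g\notin G'$ with $C\to g$; writing $C$ as a nontrivial convergent sequence and thinning it, I get pairwise distinct $c_n\in G'$ with $c_n\to g$ and $c_n\neq g$. The only feature of the data I will use is that $g\in\overline{G'}\setminus G'$, together with the compactness of $K=\{c_n:n\in\omega\}\cup\{g\}$, which controls all convergence involving the $c_n$.

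Next I set $Y=\{c_ic_j^{-1}:i<j\}\cup\{1\}$, with intended vertex $1$. First I would thin $\langle c_n\rangle$ once more so that the map $(i,j)\mapsto c_ic_j^{-1}$ (for $i<j$) is injective and never equals $1$. This is a routine greedy thinning: at stage $k$ the newly chosen term, being the largest index so far, must avoid only finitely many forbidden values (those of the form $c_lc_m^{-1}c_i$ with $i,l,m$ among the indices already chosen), and the tail of the sequence still converges to $g$, so such a choice is always available; in the boolean case it suffices to pass to a linearly independent subsequence exactly as in the proof of Lemma~\ref{seq}. After this thinning each point of $Y\setminus\{1\}$ is isolated in $Y$.

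To see that $Y$ is homeomorphic to $D_\omega$ I would view $Y$ as the image of the upper-triangular part of $K\times K$ under the continuous map $(x,y)\mapsto xy^{-1}$. I identify the $i$-th column of the array with $\{c_ic_j^{-1}:j>i\}$ and check that the neighborhood filter of $1$ in $Y$ is generated by the tails $\{1\}\cup\{c_ic_j^{-1}:n\le i<j\}$. The point is that $c_{i_k}c_{j_k}^{-1}\to 1$ holds precisely when $i_k\to\infty$ (which forces $j_k\to\infty$ since $i_k<j_k$), because any other subsequential pair of index limits in the compact square $K\times K$ produces a value different from $1$. This exhibits $Y$ as the product of two convergent sequences with its two axes removed, which is $D_\omega$ by the description in Section~\ref{TS}.

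Closedness is where the hypothesis $g\notin G'$ does the work, and here I emphasize that the copy is closed \emph{in $G'$} (the vertex-side ``edge'' points below are genuine points of $G$, so a copy closed in all of $G$ cannot be expected). Any sequence of distinct points of $Y$ has, after thinning, both of its indices converging in $\omega+1$; as $i_k<j_k$, the only possible subsequential limits in $G$ are $1$ (both indices tending to infinity) and the edge points $c_ig^{-1}$ (a fixed smaller index, the larger index tending to infinity). Since $g\notin G'$, each $c_ig^{-1}$ lies in $G\setminus G'$, whence $\overline{Y}\cap G'=Y$ and $Y$ is closed in $G'$; the same computation shows that each column is closed and discrete in $G'$, as required for $D_\omega$. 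I expect the main obstacle to be the verification that the subspace topology at $1$ is exactly this quadrant/column filter---equivalently, that no stray sequence of differences converges to $1$ or to an interior point of $Y$---for which the injectivity of the difference map and the compactness of $K$ are the two essential ingredients.
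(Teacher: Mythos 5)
Your proposal is correct and takes essentially the same route as the paper: both arguments extract, via sequentiality, a sequence $c_n\to g\notin G'$ of points of $G'$ and then pass to the differences $c_ic_j^{-1}$, whose column limits $c_ig^{-1}$ converge to $1$ but lie outside $G'$ (which is precisely why the resulting copy of $D_\omega$ is closed in $G'$ though not in $G$). The paper compresses your explicit verification into the remark that $G'\cap (C\cup\{g\})\cdot(C\cup\{g\})^{-1}$ is a first countable closed subspace of $G'$ that is not locally compact at $1$ and then invokes the standard argument producing a closed copy of $D_\omega$; your hands-on construction is that standard argument written out, and it is sound because $\overline{Y}$, being a Hausdorff continuous image of the compact metrizable space $K\times K$, is itself compact metrizable, so your analysis of subsequential limits does legitimately determine the subspace topology at the vertex.
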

\begin{proof}
There exists a point $a\in G\setminus G'$
and a sequence $\seq c_i\subseteq G'$ such that $c_i\to a$. Note that
$c_i\cdot c_j^{-1}\to c_i\cdot a^{-1}\not\in G'$ and $c_i\cdot
a^{-1}\to 1$. Therefore $G'\cap(\seq
c_i\cup\{a\})\cdot(\seq c_i\cup\{a\})^{-1}$ is a first countable
closed subspace of $G'$ that is not locally compact at $1$. A standard
argument produces a closed copy of $D_\omega$ in $G'$.
\end{proof}

Finally, the generalization promised before Lemma~\ref{SFrech} is a
direct corollary of Lemma~\ref{vD}, Proposition~\ref{Sw}, and Lemma~\ref{subD}.
\begin{proposition}\label{nonsequ}
Let $G$ be a sequential $k_\omega$ group and $G'\subseteq G$ be a
subgroup of $G$ such that $\overline{G'}$ is not Fr\'echet and
$\overline{G'}\not=G'$. Then $G'$ is not sequential.
\end{proposition}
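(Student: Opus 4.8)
The plan is to obtain the proposition as a direct assembly of the three preceding results, so that the work reduces to checking that $G'$ satisfies the hypotheses of each. First I would note that $\overline{G'}$ is a \emph{sequential $k_\omega$ group}: it is a closed subgroup of the sequential $k_\omega$ group $G$, and closed subspaces of sequential spaces are sequential, while if $\seq F_n$ determines the topology of $G$ then the compacta $F_n\cap\overline{G'}$ determine the topology of $\overline{G'}$. Combined with the hypothesis that $\overline{G'}$ is not Fr\'echet, this places us in exactly the situation of Proposition~\ref{Sw}. Applying it (with the ambient group $G$) yields a copy of $S(\omega)$ that is contained in $G'$ and closed in $G$; since this copy lies inside $G'$, it is in particular closed in the subspace $G'$.

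Next I would exploit the remaining hypothesis $\overline{G'}\neq G'$, which says precisely that $G'$ is not closed in $G$. Lemma~\ref{subD} then applies directly and produces a copy of $D_\omega$ closed in $G'$. At this point $G'$ is a topological group containing closed copies of both $S(\omega)$ and $D_\omega$, so Lemma~\ref{vD}, the group-theoretic form of van Douwen's obstruction, gives at once that $G'$ is not sequential, completing the argument.

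I do not anticipate a genuine obstacle, since all the substantive work has been front-loaded: Proposition~\ref{Sw} is what actually embeds the fan, Lemma~\ref{subD} is what embeds $D_\omega$, and Lemma~\ref{vD} supplies the non-sequentiality. The only points deserving care are the two bookkeeping observations---that a set lying in $G'$ and closed in $G$ is closed in the subspace $G'$, and that $\overline{G'}\neq G'$ is equivalent to $G'$ not being closed---both of which are routine. If anything were to require extra attention it would be confirming that the closed copy of $S(\omega)$ furnished by Proposition~\ref{Sw} genuinely sits inside $G'$ (rather than merely inside $\overline{G'}$), but this is already part of that proposition's conclusion, so no additional argument is needed.
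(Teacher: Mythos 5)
Your proof is correct and is exactly the paper's argument: the paper derives Proposition~\ref{nonsequ} as a direct corollary of Lemma~\ref{vD}, Proposition~\ref{Sw}, and Lemma~\ref{subD}, which is precisely the assembly you describe. The extra verifications you supply (that $\overline{G'}$ is itself a sequential $k_\omega$ group, and the two closedness bookkeeping points) are the routine details the paper leaves implicit.
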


Note that the condition that $\overline{G'}$ is not Fr\'echet cannot
be dropped above as the standard embeddings
$\Q\subseteq\overline{\Q}=\R\subseteq\R^\infty$ show. Here $\R^\infty$
is the direct limit of $\R^n$'s.

As a simple application of Proposition~\ref{nonsequ}, consider the
following example that indicates that sequentiality may not be readily
inherited by countable subgroups (unlike Fr\'echetness) even in the
$k_\omega$ case. Note that similar topologies have been considered
before (see, e.g.\ \cite{MPS}), although in a different context.

\begin{example}\label{Rcycl}
A $k_\omega$ sequential topology on $\R$ such that the
only proper sequential subgroups of $\R$ are closed cyclic.
\end{example}
Applying the
stronger version of Lemma~\ref{seq} mentioned in Remark~\ref{Rgroups}, one can
construct real numbers $\seq r_n$ such that $r_n\to\infty$ and a
$k_\omega$ group topology on $\R$ coarser than the original topology
and such that it is the finest group topology in which
$r_n\to0$. There is some freedom in choosing the algebraic properties
of $r_n$'s as well. Thus one can assume that
all $r_n$'s are integers, or linearly independent over $\Q$. It is
easy to see that in such a topology $\R$ becomes a sequential non
Fr\'echet group (indeed, its sequential order is $\omega_1$,
see \cite{S2}). It is also an easy observation that any countable
closed subgroup of $\R$ must be cyclic and that every cyclic subgroup
of $\R$ is dense in itself in the new topology. Thus, by
Proposition~\ref{nonsequ}, the only possible sequential subgroups of $\R$ in this
topology are cyclic and $\R$ itself.

Given any countably many cyclic subgroups of $\R$, it is not difficult
to pick a sequence as above that would make each one of the subgroups
not closed (and thus not sequential by Proposition~\ref{nonsequ}). The
question whether such a sequence can make {\it all\/}
nontrivial subgroups of $\R$ cease to be sequential seems to require
some number theoretic tools (such as deeper understanding of Kronecker
sequences $n\alpha\mod 1$ for an irrational $\alpha$) the author
currently lacks. Note that Theorem~3.1 of \cite{BDMW} implies that the
sequence as above, consisting of {\it integers\/} must have $\limsup
r_{n+1}r_n^{-1}<\infty$ for the set of $\alpha\in\R$ such that
$r_n\alpha\to0$ (topologically torsion elements) to be
countable. Since this property is not satisfied by the construction in
Lemma~\ref{seq}, a single integer sequence $\seq r_n$ is not enough to ensure that
every cyclic subgroup is not closed in this topology. It is unclear
whether non integer sequences would have this property, or if a countable
number of sequences can provide the required properties (i.e. make
every cyclic subgroup dense in $\R$).

The next lemma shows that one can add a free $S_2$-embedding that
witnesses the nonsequentiality of a given subgroup without disturbing
countably many other such embeddings. The requirement that $G$ is
boolean and co-countable can be weakened significantly at the expense
of a longer proof.

\begin{lemma}\label{fr}
Let $G$ be a boolean co-countable sequential $k_\omega$ group, $G'$ be a countable
nondiscrete subgroup of $G$. Let ${\cal S}$ be a countable family of
free $S_2$-embeddings into $G$. Let ${\cal U}$ be a countable stable
family of open subsets of $G$. Then there exists a
$k_\omega$ group topology $\tau^\infty$ on $G$, coarser than the
original topology, and a free $S_2$-embedding $s':\omega^2\to G'$ with
respect to $\tau^\infty$ such that (1) each $s\in{\cal S}$ is a free
$S_2$-embedding into $G$ with respect to $\tau^\infty$; (2) each
$U\in{\cal U}$ is open in $\tau^\infty$;
and (3) $s'(i,j)\to a_i\not\in G'$ for some $a_i\to0$ in
$\tau^\infty$.
Moreover $\tau^\infty$ is the finest group topology on $G$ coarser
than its original topology and such that $S_i\to a_i$ for some $\seq S_i$.
\end{lemma}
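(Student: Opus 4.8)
The plan is to obtain $\tau^\infty$ directly as the finest group topology on $G$, coarser than the original one, in which a single countable family of prescribed sequences converges, and then to read off every claimed property from an explicit determining family of compacta exactly as in Lemma~\ref{seq}. First I would fix a determining family $\seq F_n$ for the original topology, closed under finite sums, inverses and unions, and enumerate the countable data: the elements of $G'$, of ${\cal U}$, and the embeddings ${\cal S}=\seq s_n$. I would then choose, by induction, midpoints $a_i\in G\setminus G'$ together with columns $s'(i,\cdot)\subseteq G'$, and declare the forced convergences to be $s'(i,j)\to a_i$ (for each $i$) together with $a_i\to 0$. Reindexing all of these as $\seq S_i$ produces the family named in the `Moreover' clause, and $\tau^\infty$ is by definition the finest group topology coarser than the original one in which every $S_i$ converges to its prescribed limit; that $s'$ is then a free $S_2$-embedding into $G'$ with vertex $0$ is what witnesses the intended non-sequentiality of $G'$, since $0\in G'$ is a non-isolated point of $\overline{s'(\omega^2)}$ reached by no sequence from $s'(\omega^2)$, while $a_i\notin G'$.

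The combinatorial core is the inductive choice of the $a_i$ and the $s'(i,j)$. As in Lemma~\ref{seq}, at each stage I would impose a disjointness condition forcing the newly selected points and (using that $G$ is boolean) all of their finite sums to escape the compactum $\cup_{j\le n}F_j$ currently under consideration. This single requirement does the bulk of the work: it guarantees on the one hand that every diagonal set $\set s'(i,f(i)):i\in\omega.$ is closed discrete, so that no diagonal converges to $0$ and $s'$ is genuinely free, and on the other hand that the algebraic-closure family $\set s_{k,n}((\bigcup_i S_i)^k\times F_n):k,n\in\omega.$ of finite sums of the forced sequences with the old compacta satisfies the hypotheses of Lemma~\ref{komega}. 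Applying Lemma~\ref{komega} to this family then shows that $\tau^\infty$ is a Hausdorff $k_\omega$ group topology, while its maximality --- the `Moreover' clause --- follows from the uniqueness of a compact Hausdorff topology on each of these sums, exactly the argument used at the end of the proof of Lemma~\ref{seq} (the strengthened, $T$-sequence form of Remark~\ref{Rgroups} is what lets several convergences be forced at once).

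Two further requirements must be woven into the same induction. For condition (2) I would use that $0\in U$ for every member $U$ of a stable family and diagonalize so that, for each $U\in{\cal U}$, the midpoints $a_i$ and the columns $s'(i,\cdot)$ eventually lie inside $U$; then the quotient map $p\colon G\to G/H$, $H=\cap{\cal U}$, stays continuous and each $U$ remains open in $\tau^\infty$. Here the co-countability of $G$ enters: since ${\cal U}$ is countable, $\psi(G/H)\le\aleph_0$, so $G/H$ is countable, and Lemma~\ref{stabres} lets me keep this quotient together with the relevant traces of $G'$, of ${\cal S}$ and of the $F_n$ under one countable bookkeeping, and invoke the countable $k_\omega$ theory of free $S_2$-embeddings from Section~\ref{TS} when checking freeness. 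For condition (1) the decisive observation is that a map $s$ is a free $S_2$-embedding into a space determined by compacta $\seq K_m$ exactly when each set $\set i: s(i,j)\in K_m\text{ for some }j.$ is finite; since the compacta determining $\tau^\infty$ are the finite sums $\sum^k(\bigcup_i S_i)+F_n$, I would dovetail the enumeration of ${\cal S}$ into the disjointness induction so that each $s_n\in{\cal S}$ is forced to escape every newly formed sum, i.e.\ the chosen $a_i$ and $s'(i,j)$ avoid the (countably many, closed discrete) diagonals of all $s\in{\cal S}$ simultaneously.

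The hard part will be precisely this last simultaneous diagonalization. Because the compacta determining $\tau^\infty$ are themselves built from finite sums of the very sequences I am forcing to converge, each constraint of the form `$s_n$ escapes $\sum^k(\bigcup_i S_i)+F_m$' couples the choices of the $a_i$ and the $s'(i,j)$ across all indices at once, and must be met together with the neighbourhood constraints coming from ${\cal U}$ and the basic disjointness that keeps the sums free. Verifying that one inductive sequence of choices can satisfy all of these requirements at the same time is where the bookkeeping is genuinely delicate; however, each individual constraint is of the same character as, and no stronger than, the disjointness condition already handled in full in the proof of Lemma~\ref{seq}, so the induction should go through once the enumeration is arranged to visit every constraint cofinally.
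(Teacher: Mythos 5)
There is a genuine gap, and you put your finger on it yourself: the simultaneous, one-shot forcing of all the convergences $s'(i,j)\to a_i$ and $a_i\to 0$ is never actually justified. Lemma~\ref{seq}, which you invoke as the template, forces a \emph{single} sequence, extracted from a \emph{closed discrete} set, to converge; its Hausdorffness proof (injectivity of the addition maps, established through the inductive disjointness of tails) does not transfer automatically to countably many interacting sequences whose limits $a_i$ are themselves being forced to converge to $0$. The compacta of the prospective topology are sums involving all the $S_i$ at once, so whether any given diagonal is closed discrete --- i.e., whether $s'$ and the members of ${\cal S}$ are free --- cannot be checked until all the choices are made, and your closing assertion that ``the induction should go through'' is precisely the multi-sequence strengthening of Lemma~\ref{seq} that the paper only alludes to, without proof, in Remark~\ref{Rgroups} and deliberately does not use here. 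The paper's proof dissolves this coupling rather than confronting it, by \emph{iterating}: it applies Lemma~\ref{seq} once to force $S_0\to a_0$, obtaining a $k_\omega$ topology $\tau'$ in which $G$ is still a boolean co-countable $k_\omega$ group and ${\cal U}$, ${\cal U}'$ are still stable families of open sets; it then re-runs the entire selection argument inside $\tau'$ to choose $S_1$, and so on, finally setting $\tau^\infty=\cap_i\tau^{(i)}$. Each $S_{i+1}$ is chosen \emph{after} the topology incorporating $S_0,\dots,S_i$ exists, so every stage needs only the single-sequence lemma, and conditions (i)--(v) of the paper's proof (in particular, that each compact of $\tau^\infty$ meets only finitely many $S_i$) come out of the bookkeeping of the iteration.

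A second, related omission: your disjointness induction cannot be run ``as in Lemma~\ref{seq}'' because the points to be selected must lie in $G'$ (and almost inside every member of ${\cal U}$), and $G'$ is a countable \emph{nondiscrete} subgroup, not a closed discrete set; nothing in your sketch rules out that the compact sums swallow exactly the parts of $G'$ you need. The paper's mechanism for making these choices possible is to pass, via Lemma~\ref{stabres} and co-countability, to a countable quotient $G/H$, where every compact set is scattered while $p(G')$ is dense in itself; the column points are then picked from relatively open dense-in-itself sets $V_i\to 0$, which no finite union of scattered compacta can cover, and lifted back to $G'$. The same passage yields the limits for free: the paper takes the $a_i$ inside $H=\cap{\cal U}'$ with $a_i\to 0$ already in the \emph{original} topology, so that $a_i\notin G'$ (as $p$ is injective on $G'$) and no additional convergence $a_i\to 0$ ever has to be forced --- removing one more layer of the Hausdorffness problem that your plan creates for itself by treating the $a_i$ as arbitrary points of $G\setminus G'$ whose convergence must also be imposed.
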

\begin{proof}

Suppose the topology of $G$ is determined by a countable family $\seq
F_n$ of compact subsets closed with respect to finite intersections,
finite unions, and natural algebraic operations. For the sake of
simplicity, assume that ${\cal S}=\{s\}$ where $s:\omega^2\to G$ is a
free $S_2$-embedding. The case of a countable ${\cal S}$ is similar.
Let $G'$ be an arbitrary countable subgroup. Use Lemma~\ref{stabres}
to extend $\cal U$ to a countable family
$\cal U'$ of open subsets of $G$ such that $\cal U'$ is stable,
$H=\cap\cal U'$ is a closed subgroup of $G$, $p$ is one-to-one on
$G'\cup\overline{s(\omega^2)}$ and for any $n\in\omega$ the set $\set
i\in\omega: p\circ s(i,j)\in p(F_n)\hbox{ for some $j\in\omega$}.$ is
finite. Now the family $\langle p(F_n): n\in\omega\rangle$ of
compact subsets determines the topology of the countable group $G/H$,
so $p\circ s$ is a free $S_2$-embedding into $G/H$, and $U=p^{-1}(p(U))$ for each
$U\in\cal U'$. We may assume, by extending $\cal U'$ if necessary, that
$\set p(U): U\in{\cal U}'.$ forms a basis of open neighborhoods for
some first-countable group topology $\tau_\omega$ coarser that the
topology $\tau$ induced by $p$.

Find a function $f:\omega\to\omega$ such that $K=\overline{p\circ s(\set (m,n):
n\geq f(n).)}$ is compact in $\tau_\omega$. Since the set $D=\set
p\circ s(m,n): n<f(m).$ is closed and discrete in $\tau$  we can
extend $\tau_\omega$ to a first countable topology $\tau_\omega'$
coarser than $\tau$ so that $D$ is closed and discrete in
$\tau_\omega'$.

If $G'$ is not discrete, its image $p(G')$ is dense in
itself in $\tau$, $\tau_\omega$, and $\tau_\omega'$. Therefore, one
can find a countable family $\seq V_n$ of (relatively) open 
in $\tau_\omega'$ subsets of $p(G')$ such that $\seq V_n\to0$ in
$\tau_\omega'$. Note that if $C=\seq c_n\cup\{a\}$ is chosen so that
$a\in p^{-1}(0)$ and $c_n\in p^{-1}(V_n)$ then for any $n,k\in\omega$
the intersection $(F_n+\sum^kC)\cap D'$ is finite, where $D'=\set
s(m,n):n<f(m).$. Indeed $p(C)$
is a compact subset of $p(G')$ in $\tau_\omega'$, $p$ is one-to-one on
$D'$, and $p(D')=D$ is a closed discrete subset of $G/H$ in $\tau_\omega'$.

Use induction on $i\in\omega$ to find $c_i'\in V_i$ such that for any
$m>i$
$$
(c_m'+\cup_{j\leq i}p(F_j)+\sum^iC_i)\cap (K\cup(\cup_{j\leq i}p(F_j)))=\varnothing
$$
where $C_i=\set c_j':j\leq i.\cup\{0\}$. Since each $V_n$ is dense in
itself (in both $\tau_\omega'$ and $\tau_\omega$), while $K$, $p(F_j)$'s
and their sums
are compact in $\tau_\omega$ and thus scattered, such a choice of $c_i$
is possible.

Now inductively assume that for any $n\in\omega$ and any $m<M$ the set
$$
\set j:p\circ s(j,l)\in p(F_n)+\sum^mC\hbox{ for some $l\geq f(j)$}.
$$ is finite. Pick $i=\max\{M,n\}$. Note that the set $p(F_n)+\sum^MC$
is a union of $p(F_n)+\sum^MC_i^-$ and finitely many sets of the form
$p(F_n)+\sum^{M-1}C+c_l'$ for some $l\leq i$, where $C_i^-=\set
c_j':j>i.\cup\{0\}$. By the induction hypothesis it is enough to show
that the set
$$
\set j:p\circ s(j,l)\in
p(F_n)+\sum^MC_i^-\hbox{ for some $l\geq f(j)$}.
$$
is finite. To see
this, observe that for $c\in\sum^MC_i^-$ either
$c=0$ or one can write $c=c^1+\cdots+c^p$ where
$c^j=c_{\nu(j)}'$ such that $\nu$ is strictly increasing and
$\nu(j)>i$. In the latter case, by the choice of $c_i'$ the
intersection $(c_{\nu(p)}'+p(F_n)+\sum^{\nu(p)-1}C_{\nu(p)-1}^-)\cap
K=\varnothing$ so
\begin{gather*}
\set j:p\circ s(j,l)\in p(F_n)+\sum^MC_i^-\hbox{ for some $l\geq f(j)$}.\\
=\set j:p\circ s(j,l)\in p(F_n)\hbox{ for some $l\geq f(j)$}..
\end{gather*}
The last set is finite since $p(F_n)$ is compact and
$p\circ s$ is a free $S_2$-embedding in $\tau$.

Let $c_i\in G'\cap p^{-1}(c_i')$, $S=\seq c_i$. It is immediate that
$S$ is a closed discrete subset of $G$ such that for every
$n,k\in\omega$ the set
$$
\set j:s(j,l)\in F_n+\sum^kS\hbox{ for some
$l\in\omega$}.
$$
is finite. Moreover, since $p(S)\to0$ in $\tau_\omega$, $S\ain U$ for
every $U\in{\cal U}'$.

Choose an infinite sequence $\seq a_i\subseteq H$ such that
$a_i\to0$. Use Lemma~\ref{seq} to find an $S_0\subseteq S$ such that $S\to
a_0$ in a $k_\omega$-topology $\tau'$ on $G$ coarser than the
original topology and such that $\tau'$ is the finest group topology
on $G$ with these properties. Note that with respect to $\tau'$ both
${\cal U}$ and ${\cal U}'$ remain stable
countable families of open subsets of $G$. Thus the
argument above can be repeated using $\tau'$ instead of the original
topology on $G$ to obtain $S_1\to a_1$ and $\tau''$. Eventually one
can construct $k_\omega$-topologies $\tau^{(i)}$ and sequences $S_i\to
a_i$ in $\tau^{(i)}$ such that (i) ${\cal U}$ is a countable stable
family of subsets of $G$ open in 
$\tau^\infty=\cap_i\tau^{(i)}$; (ii) $\tau^{(i+1)}$ is the finest group
topology on $G$ coarser than $\tau^{(i)}$ in which $S_i\to a_i$; (iii)
each $\tau^{(i)}$ is $k_\omega$; (iv) $S_i$ is a closed discrete subset
of $G$ in $\tau^{(j)}$ for all $i\geq j$; (v) $s$ is a free $S_2$-embedding in
$\tau^\infty$.

Let now $\seq K_i$ be a family of compact subsets of $G$ in
$\tau^\infty$ that determines $\tau^\infty$. Let $S_i=\seq c^i_j$ and
put $s'(j,l)=c^j_l$. We can assume that $s'$ is one-to-one and (using
(iv) above) that each $K_n$ intersects at most finitely many
$S_i$'s. Thus $s'$ is a free $S_2$-embedding into $G$ in
$\tau^\infty$.
\end{proof}

\begin{remark}\label{otherStwo}
As an alternative to building $S_i$'s at the end of the proof above
one could show that after adding $S_0$ the closure of $G'$ in $G$ will
not be Fr\'echet and then use the proof of Proposition~\ref{nonsequ}
that produces a closed copy of $S_2^-$ in $G'$.
\end{remark}

The next lemma is used to make the quotients of $G$ Fr\'echet.
\begin{lemma}\label{vertica}
Let $G$ be a boolean co-countable $k_\omega$ group and let ${\cal U}$ be a
countable stable family of open subsets of $G$. Let
$H=\cap{\cal U}$ be a closed subgroup of $G$ such that $H\not\subseteq
F+E$ for any compact
$F\subseteq G$ and any countable subset $E\subseteq G$. Let
$\sigma:\omega^2\to G/H$ be a free $S_2$-embedding in the topology
induced by the natural qoutient map $p$ and let ${\cal S}$ be a 
countable family of free $S_2$-embeddings into $G$. Then there exists
a countable subset $S\subseteq G$ such that each $U\in{\cal U}$ is
open in the topology $\tau$ which is the finest topology coarser than
the original topology of $G$ and such that $S\to v$ for some $v\in G$,
each element of ${\cal S}$ is a free $S_2$-embedding in $\tau$ and
$\sigma$ is no longer a free $S_2$-embedding.
\end{lemma}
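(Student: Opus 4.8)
I want to destroy the free $S_2$-embedding $\sigma:\omega^2\to G/H$ while preserving all the embeddings in ${\cal S}$ and keeping every $U\in{\cal U}$ open. The key structural fact I will exploit is the hypothesis $H\not\subseteq F+E$ for any compact $F$ and countable $E$: this says $H$ is ``large'' transversally to every compact set, which is precisely what I need to find, inside $H$, a sequence that simultaneously converges (after passing to the new topology) to collapse $\sigma$, yet meets each compact $F_n$ in a controlled way so as not to interfere with the ${\cal S}$-embeddings.

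**The construction.** Let $\seq F_n$ determine the topology of $G$, closed under the algebraic operations, and let $\sigma(i,j)\to a_i\to v$ in $G/H$ be the data of the free $S_2$-embedding. Let $s'(i,j)\in G$ be a lift of $\sigma(i,j)$ chosen inside $G'$ or at least so that $p\circ s'=\sigma$. The idea is that $\sigma$ fails to be a free $S_2$-embedding in $\tau$ as soon as some ``vertical'' sequence $\set s'(i,j):j\in\omega.$ (for a fixed $i$) ceases to be closed discrete upstairs, i.e.\ acquires a limit point modulo the new convergence $S\to v$. So I will choose $S\subseteq H$ of the form $\seq b_i$ with $b_i\to v$ (in $\tau$) such that, after translating, the combination $s'(i,j)\cdot b_i^{-1}$ or an analogous boolean sum lands arbitrarily close to a single point, forcing a diagonal sequence through $\sigma$ that was previously non-convergent to become convergent. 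Concretely I would pick, by induction on $i$, points $b_i\in H$ avoiding the relevant compact sums $\cup_{j\le i}F_j+\sum^i S_i$ (this is exactly where $H\not\subseteq F+E$ is used: at each stage only a compact-plus-countable set must be avoided, so $H$ still has room), while arranging that $p(b_i)$ tracks $a_i$ closely enough that the image of the diagonal collapses to $v$.

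**Preserving ${\cal S}$ and ${\cal U}$.** Having fixed the target sequence $S=\seq b_i\to v$, I invoke Lemma~\ref{seq} to obtain the finest $k_\omega$ group topology $\tau$ coarser than the original in which $S\to v$, exactly as in the proof of Lemma~\ref{fr}. The inductive avoidance conditions on the $b_i$ (keeping them off the compact sums $F_n+\sum^k S$) guarantee, by the same counting argument used in Lemma~\ref{fr}, that for each $s\in{\cal S}$ and each $n$ the set $\set i:s(i,j)\in F_n+\sum^k S\hbox{ for some }j.$ remains finite, so each $s\in{\cal S}$ stays a free $S_2$-embedding in $\tau$. Since $S\subseteq H=\cap{\cal U}$ we have $S\ain U$ for every $U\in{\cal U}$, so each $U$ remains open in the finest topology in which $S\to v$; hence ${\cal U}$ stays a stable family of $\tau$-open sets.

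**The main obstacle.** The delicate point is the simultaneous requirement (3)--(style) that $\sigma$ genuinely \emph{fails} to be a free $S_2$-embedding in $\tau$ while the embeddings in ${\cal S}$ \emph{survive}: the new convergence $S\to v$ must be tuned to entangle precisely the vertical sequences of $\sigma$ without creating any new convergence along the sequences of ${\cal S}$. This forces $S$ to lie essentially ``parallel'' to the $a_i$'s of $\sigma$ (so $b_i$ must shadow $a_i$) yet ``transverse'' to everything feeding the ${\cal S}$-embeddings. Reconciling these is where $H\not\subseteq F+E$ does the real work, since it is what lets the inductive choice of $b_i$ both hit the neighborhood structure of $a_i$ in $G/H$ and dodge the countably many compact constraints imposed by ${\cal S}$; verifying that these two demands are compatible at every inductive stage is the heart of the argument, and the bookkeeping that a diagonal of $\sigma$ converges to $v$ in $\tau$ (so $\sigma$ is collapsed) while no diagonal of any $s\in{\cal S}$ does (so $s$ is preserved) is the step I expect to be most technical.
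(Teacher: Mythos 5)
Your construction places the new convergent sequence inside the kernel, $S=\langle b_i:i\in\omega\rangle\subseteq H$, and this is fatal; it also makes your phrase ``$p(b_i)$ tracks $a_i$'' meaningless, since $p(b_i)=0$ for every $i$. Indeed, no such $S$ can satisfy the lemma. Suppose first that $v\in H$. Let $\tilde\tau=\{\,p^{-1}(W):W\ \hbox{open in the quotient topology of }G/H\,\}$ be the pullback of the original quotient topology; it is a (non-Hausdorff) group topology on $G$ coarser than the original one, and $S\to v$ in $\tilde\tau$ trivially, because every $\tilde\tau$-neighborhood of $v$ contains all of $H\supseteq S$. Since $\tau$ is the \emph{finest} topology coarser than the original in which $S\to v$, we get $\tilde\tau\subseteq\tau$, so every originally quotient-open subset of $G/H$ stays quotient-open for $\tau$; combined with $\tau\subseteq$ (original), the quotient topology on $G/H$ is literally unchanged, and $\sigma$ remains a free $S_2$-embedding --- the opposite of what is required. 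If instead $v\notin H$, then $p(b_i)=0\to p(v)\neq 0$ in the $\tau$-quotient topology, so every $\tau$-quotient-open set containing $p(v)$ contains $0$; if each $U\in{\cal U}$ were still $\tau$-open, then $v+U$ would be $\tau$-open and $H$-invariant, so $p(v+U)$ would be such a set, giving $0\in p(v+U)$, hence $v\in HU=U$ for every $U\in{\cal U}$, i.e.\ $v\in\cap{\cal U}=H$, a contradiction. So with $S\subseteq H$ you must lose either the destruction of $\sigma$ or the openness of ${\cal U}$. A related confusion is your criterion for killing $\sigma$: whether the lifted vertical sequences $\{s'(i,j):j\in\omega\}$ are closed discrete \emph{upstairs} is irrelevant, since the definition concerns $G/H$; what must fail is the closed discreteness in $G/H$ of some set $\{\sigma(i,j):j\leq f(i)\}$ (the vertical sequences of $\sigma$ downstairs already converge to the $a_i$).

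The paper's proof takes $S$ to be a set of lifts of a \emph{diagonal of $\sigma$ itself}, not a subset of $H$. First choose $f:\omega\to\omega$ so that $\langle\sigma(n,f(n)):n\in\omega\rangle$ converges to the vertex $v_\sigma$ in the first-countable group topology $\tau_\omega$ on $G/H$ generated by $\{p(U):U\in{\cal U}\}$; this is what later yields $S\subseteq^* U$ for every $U\in{\cal U}$, so ${\cal U}$ survives. Then pick $c_i\in p^{-1}(\sigma(i,f(i)))$ inductively, avoiding compact-plus-countable obstruction sets built from the $F_n$'s and the finite sums $\sum^iC_i$; the hypothesis $H\not\subseteq F+E$ is applied to the \emph{cosets} $p^{-1}(\sigma(i,f(i)))$, which are translates of $H$, and this is exactly where it does its work, giving simultaneously that $S=\langle c_i\rangle$ is closed discrete and that each $s\in{\cal S}$ meets each $\sum^kS+F_n$ in only finitely many columns (so ${\cal S}$ survives, by the counting argument of Lemma~\ref{fr} that you correctly anticipated). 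Finally Lemma~\ref{seq} produces a subset $S'\subseteq S$ and the topology $\tau$ with $S'\to v\in p^{-1}(v_\sigma)$. The destruction of $\sigma$ is then automatic rather than the delicate tuning problem your last paragraph worries about: $p$ is continuous for the $\tau$-quotient topology, so $p(S')$, an infinite subset of $\{\sigma(i,f(i)):i\in\omega\}$, converges to $v_\sigma$, and therefore the set $\{\sigma(i,j):j\leq f(i)\}$ is no longer closed discrete in $G/H$.
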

\begin{proof}
To simplify notation, assume ${\cal S}=\{s\}$ for some free
$S_2$-embedding $s:\omega^2\to G$. Let $\seq F_n$ be a countable
family of compact subsets of $G$ that determines the topology of $G$.
Since $\sigma$ is a free $S_2$-embedding into $G/H$, one can
find a function $f:\omega\to\omega$ such that the sequence $\langle
\sigma(n,f(n)):n\in\omega\rangle$ converges to the vertex of
$\sigma$ in the first-countable group topology $\tau_\omega$ generated
by the base of open neighborhoods $\set p(U):U\in{\cal U}.$. By
induction on $i\in\omega$ find a sequence $c_i\in
p^{-1}(\sigma(i,f(i)))$ such that for any $n\in\omega$, $m>i$
$$
(c_m+\cup_{j\leq i}F_j+\sum^iC_i)\cap(K\cup(\cup_{j\leq i}F_j))=\varnothing
$$
where $C_i=\set c_j:j\leq i.$. Since $K$ is countable, the condition
imposed on the kernel of $p$ makes such selection possible.

Just as in Lemma~\ref{fr} one can show that $S=\seq c_n$ has the
property that for any $k,n\in\omega$ the set $\set
j:s(j,l)\in\sum^kS+F_n.$ is finite and $S\ain U$ for each $U\in{\cal
U}$. It is also immediate that $S$ is a closed discrete subset of
$G$. Apply Lemma~\ref{seq} to find a subset $S'\subseteq S$ such that
$S'\to v\in p^{-1}(v_\sigma)$ where $v_\sigma$ is the vertex of
$\sigma$.
\end{proof}

\section{Example}\label{exa}
This is the main example in the paper.
\begin{example}[$\diamondsuit$]\label{nrg}
A sequential group $G$ such that every countable sequential subgroup of $G$ is
discrete and every quotient of $G$ is either Fr\'echet or has an
uncountable pseudocharacter.
\end{example}
Let $A\subseteq 2^{\omega_1}$ be a subspace homeomorphic to the one
point compactification of a discrete space of size $\omega_1$. We can
assume, by thinning out and translating $A$ if necessary, that $0$ is
the only non isolated point of $A$ and that all the isolated points
are linearly independent over ${\mathbb F}_2$. Let $G$
be the subgroup of $2^{\omega_1}$ generated by $A$.

\begin{claim}\label{csh}Let $n\in\omega$, $E\subseteq G$ be a
countable subset of $G$. Let $H$ be a subgroup of $G$ generated by an
uncountable subset $B$ of $A$. Then $\sum^nA+E$ does not cover $H$. Indeed,
let $E'\subset A$ be a countable set whose span contains $E$, and
let $a^1,\ldots,a^{n+1}\in B$ be points of $B$ that are not in the
span of $E'$. Then $a^1+\cdots+a^{n+1}\not\in\sum^nA+E$.
\end{claim}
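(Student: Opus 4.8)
The plan is to exploit the fact that $G$, being a subgroup of $2^{\omega_1}$, is boolean, and hence a vector space over ${\mathbb F}_2$ for which $A\setminus\{0\}$ is a basis: the isolated points of $A$ are linearly independent over ${\mathbb F}_2$ and generate $G$, while $0$ is just the zero vector. Every $g\in G$ then has a unique finite representation as a sum of distinct basis vectors; write $\mathrm{supp}(g)\subseteq A\setminus\{0\}$ for the set of basis vectors occurring in it. Boolean addition corresponds to symmetric difference of supports, i.e.\ $\mathrm{supp}(g+h)=\mathrm{supp}(g)\triangle\mathrm{supp}(h)$.

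First I would recast $\sum^nA$ in these terms. Since $0\in A$ and $a+a=0$ in a boolean group, any sum of $n$ elements of $A$ collapses, after cancelling repeated terms and discarding copies of $0$, to a sum of at most $n$ distinct nonzero elements of $A$; conversely any sum of $k\le n$ distinct basis vectors can be padded with $n-k$ copies of $0\in A$ to a sum of exactly $n$ elements of $A$. Hence $\sum^nA=\{\,g\in G:|\mathrm{supp}(g)|\le n\,\}$.

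Next, following the hint, let $E'=\bigcup_{e\in E}\mathrm{supp}(e)\subseteq A\setminus\{0\}$. As $E$ is countable and each support is finite, $E'$ is countable, and its ${\mathbb F}_2$-span contains $E$. Because $B$ is uncountable while $E'$ is countable, I can choose $n+1$ distinct vectors $a^1,\dots,a^{n+1}\in B\setminus\{0\}$ lying outside $E'$; equivalently, outside the span of $E'$, since a basis vector lies in that span only if it already belongs to $E'$. Set $x=a^1+\cdots+a^{n+1}\in H$, so that $\mathrm{supp}(x)=\{a^1,\dots,a^{n+1}\}$ has size $n+1$ and is disjoint from $E'$.

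Finally I would derive the contradiction. Suppose $x=g+e$ with $|\mathrm{supp}(g)|\le n$ and $e\in E$. Then $\mathrm{supp}(e)\subseteq E'$ is disjoint from $\mathrm{supp}(x)$, so from $\mathrm{supp}(x)=\mathrm{supp}(g)\triangle\mathrm{supp}(e)$ I obtain $\mathrm{supp}(g)=\mathrm{supp}(x)\cup\mathrm{supp}(e)\supseteq\mathrm{supp}(x)$, whence $|\mathrm{supp}(g)|\ge n+1$, contradicting $|\mathrm{supp}(g)|\le n$. Thus $x\notin\sum^nA+E$, so $\sum^nA+E$ does not cover $H$. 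The only step demanding any care, and thus the main obstacle such as it is, is the second one: the boolean cancellation and the presence of $0$ in $A$ must both be used to pin down $\sum^nA$ precisely as the set of elements of support at most $n$. Once that description is in hand, the support/symmetric-difference counting makes the conclusion immediate.
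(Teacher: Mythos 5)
Your proof is correct and follows essentially the same route as the paper's: the claim's own justification (choose a countable $E'\subseteq A$ spanning $E$, then take $n+1$ points of $B$ outside that span) is exactly what you carry out, and your support/symmetric-difference bookkeeping over ${\mathbb F}_2$ just makes explicit the verification the paper leaves to the reader, including the one point needing care, namely that $\sum^nA$ is precisely the set of elements of support size at most $n$.
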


Finally, let
$\tau_0$ be the finest group topology on $G$ that induces the original
topology on $A$. By an observation after Lemma~\ref{komega} $G$ is a
co-countable $k_\omega$-group.

Given countable subsets $S_i\subseteq G$, $0\leq i\leq k$, and $n\in\omega$, consider
the natural additive map
$a_{\{S_i\},k,n}:\prod_{i=0}^k(S_i\cup\{0\})\times\prod^nA\to(\sum_{i=0}^k(S_i\cup\{0\}))+\sum^nA$,
viewed as a quotient map where $S_i\cup\{0\}$ is given the unique
topology such that $S_i\to0$. Suppose $a_{\{S_i\},k,n}$ induces a (compact)
Hausdorff topology on its image. Then the image has weight
$\omega_1$ so let $\set U^\alpha_{\{S_i\},k,n}:\alpha\in\omega_1.$ be a base
of open neighborhoods for the image. Now use CH to find
$B:\omega_1\to 2^G$ that lists every $U^\alpha_{\{S_i\},k,n}$ with the properties
above. Fix a $\diamondsuit$-sequence $\set
A_\alpha:\alpha\in\omega_1.$. Let $\set G_\alpha:\alpha\in\omega_1.$,
$\set \sigma_\alpha:\alpha\in\omega_1.$
list every countable subgroup of $G$ and every one-to-one map
$\sigma:\omega^2\to G$ respectively, unboundedly many times.

Construct, by induction on $\alpha\in\omega_1$, decreasing $k_\omega$ group
topologies $\tau_\alpha$ on $G$, one-to-one maps $s_\alpha:\omega^2\to
G_\alpha$, increasing countable families ${\cal S}_\alpha$ of countable subsets
of $G$, and increasing countable families ${\cal U}_\alpha$ of open in
$\tau_\alpha$ subsets of $G$ such that
\begin{itemize}
\item[(a)]each $\tau_\alpha$ is the finest group topology on $G$ that
induces the original topology on $A$ such that $S\to0$ for each
$S\in{\cal S}_\alpha$.

\item[(b)]if $G_\alpha$ is not closed discrete in $\tau_\alpha$ then
$s_\alpha$ is a free $S_2$-embedding into $G$ with respect to
$\tau_{\alpha+1}$ such that $s_\alpha(i,j)\to a_i\not\in G_\alpha$
where $a_i\to0$.

\item[(c)]each $s_\beta$ is a free $S_2$-embedding in $\tau_\alpha$ for
$\beta<\alpha$.

\item[(d)]each ${\cal U}_\alpha$ is stable.

\item[(e)]if $V=\cup\set B(\beta):\beta\in A_\alpha.$ is such that
$0\in\Int(V)$ in $\tau_\alpha$ then there is $U\in{\cal U}_\alpha$
such that $0\in U\subseteq \Int(V)$.

\item[(f)]$p_\alpha\circ\sigma_\alpha$ is not a free $S_2$-embedding in $G/H_\alpha$
with respect to the topology induced by $p_\alpha:G\to G/H_\alpha$ where
$H_\alpha=\cap{\cal U}_\alpha$ and the topology on $G$ is $\tau_\alpha$.

\end{itemize}
Suppose $\tau_\beta$, $s_\beta$, ${\cal U}_\beta$, and ${\cal
S}_\beta$ satisfying (a)--(f) have been built for $\beta<\alpha$. Put
$\tau_\alpha'=\cap_{\beta<\alpha}\tau_\beta$, ${\cal
U}_\alpha'=\cup_{\beta<\alpha}{\cal U}_\beta$, and ${\cal
S}_\alpha'=\cup_{\beta<\alpha}{\cal S}_\beta$. Observe that each
$U\in{\cal U}_\alpha'$ is open in $\tau_\alpha'$ and that
$\tau_\alpha'$ is the finest topology such that it induces the
original topology on $A$ and $S\to0$ for each $S\in{\cal S}_\alpha'$.
If $V=\cup\set
B(\beta):\beta\in A_\alpha.$ has a nonempty interior in $\tau_\alpha'$
and $0\in \Int(V)$
extend ${\cal U}_\alpha'$ to a countable family ${\cal U}_\alpha$ of
open subsets of $G$ in $\tau_\alpha'$ such that ${\cal U}_\alpha$ is
stable and $0\in U\subseteq \Int(V)$ for some $U\in{\cal
U}_\alpha$. If $G_\alpha$ is not a closed discrete
subgroup of $G$ in $\tau_\alpha'$, use Lemma~\ref{fr} to find a countable family ${\cal
S}$ of countable subsets of $G$ and a one-to-one map
$s_\alpha:\omega^2\to G_\alpha$ such that $\set s_\beta:\beta\leq\alpha.$ are free
$S_2$-embeddings with respect to $\tau_\alpha''$, and each $U\in{\cal
U}_\alpha$ is open in $\tau_\alpha''$, where $\tau_\alpha''$ is
the finest group topology on $G$ such that $S\to0$ for each $S\in{\cal
S}$, coarser than $\tau_\alpha'$. Otherwise put
$s_\alpha=s_\beta$ for some $\beta<\alpha$, ${\cal S}=\varnothing$,
and $\tau_\alpha''=\tau_\alpha'$.

Now if $p\circ\sigma_\alpha$ is a free $S_2$-embedding into $G/H$
where $p:G\to G/H$ is the natural quotient map in $\tau_\alpha''$, and
$H=\cap{\cal U}_\alpha$, use Lemma~\ref{vertica} to find a convergent
sequence $S\subseteq G$ in a $k_\omega$-topology $\tau_\alpha$ coarser
than $\tau_\alpha''$ and such that each $U\in{\cal U}_\alpha$ is open
in $\tau_\alpha$, and $p\circ\sigma_\alpha$ is not a free $S_2$-embedding in
$\tau_\alpha/H$.

That the conditions of Lemma~\ref{vertica} are satisfied follows from Claim~\ref{csh}
above and an easy observation that each compact subset of $G$ in any
$\tau_\alpha$ is included in a union of countably many translations of
a single sum of $n$ copies of $A$, as follows from (a).

If $p\circ\sigma_\alpha$ is not a free $S_2$-embedding take $S$ to be
an arbitrary convergent sequence in $\tau_\alpha''$, put
$\tau_\alpha=\tau_\alpha''$.

Put ${\cal S}_\alpha={\cal S}_\alpha'\cup{\cal S}\cup\{S\}$. (a)--(f) follow.

Let $\tau=\cap_{\alpha<\omega_1}\tau_\alpha$. It is immediate that
$\tau$ is a sequential topology on $G$ invariant with respect to
translations. Properties (a)--(f) imply that each $s_\alpha$ is a free
$S_2$-embedding with respect to $\tau$. Since each $G_\alpha$ is
listed unboundedly many times, if $G_\alpha$ is not closed discrete in
$\tau$ it is not closed discrete in some $\tau_\beta$ such that
$G_\alpha=G_\beta$. Then (b) ensures that the topology inherited by
$G_\alpha$ from $\tau$ is not sequential.

To show that $\tau$ is a group topology put ${\cal U}=\cup_\alpha{\cal
U}_\alpha$. Since each $U\in{\cal U}$ is open in $\tau$ ($\{{\cal
U}_\alpha\}$ is increasing and each $U\in{\cal U}_\alpha$ is open in
$\tau_\alpha$) and ${\cal U}$ is stable it is enough to
show that ${\cal U}_\alpha$ is a basis 
of open neighborhoods of $0\in G$ in $\tau$.

Let $F\subseteq G$ be a subset of $G$ closed in $\tau$ such that
$0\not\in F$.
Let $\alpha<\omega_1$ and $O\subseteq\omega_1$ be the set of all
$\beta\in\omega_1$ such that $B(\beta)$ is an open subset of some compact $K\subseteq G$
in $\tau_{t(\beta)}$ for $t(\beta)>\beta$ and $\overline{B(\beta)}\cap
F=\varnothing$ where the closure is taken in $\tau_{t(\beta)}$.
Put $O_\alpha=O\cap\alpha$ and call $\alpha$ {\it $F$-saturated in
$\tau$} if the following conditions are met:
\begin{itemize}
\item[(1)]if $\beta_1,\ldots,\beta_n\in O_\alpha$ and $K\subseteq G$
is compact (in $\tau_\alpha$) then there exist $\beta^1,\ldots,\beta^k\in
O_\alpha$ such that each $B(\beta^i)\cap K$ is relatively open in $K$, and
$\overline{\cup_{i\leq n}B(\beta_i)}\subseteq\cup_{j\leq
k}B(\beta^j)\subseteq\overline{\cup_{j\leq k}B(\beta^j)}\subseteq
G\setminus F$.

\item[(2)]for each $\beta\in O_\alpha$ $t(\beta)<\alpha$ and there
exists $\beta\in O_\alpha$ such that $0\in B(\beta)$.
\end{itemize}
Note that an inductive construction similar to that of Lemma~\ref{komega} shows
that $0\in\Int_{\tau_\alpha}(\cup_{\beta\in O_\alpha}B(\beta))\subseteq G\setminus F$ and a standard argument
shows that the ordinals $F$-saturated in $\tau$ form a club. Therefore
there exists an $\alpha\in\omega_1$ such that $A_\alpha=O_\alpha$ and
$0\in\Int(A_\alpha)\subseteq F$ in $\tau_\alpha$ so by (e) above there
is a $U\in{\cal U}_\alpha\subseteq{\cal U}$ such that $0\in U\subseteq
G\setminus F$.

Suppose $H\subseteq G$ is a closed (in $\tau$) subgroup of $F$ such
that $G/H$ is not Fr\'echet and $\psi(G/H)=\omega$. Since $G$ is
co-countable, $G/H$ is a countable group so there exists a free
$S_2$-embedding $\sigma:\omega^2\to G/H$ such that
$\sigma=p\circ\sigma'$ where $\sigma':\omega^2\to G$ is a free
$S_2$-embedding in $\tau$. Since ${\cal U}$
forms a base of open neighborhoods of $0\in G$ in $\tau$ one can find
$\alpha\in\omega_1$ such that $H_\beta\subseteq H$
for any $\beta\geq\alpha$ and thus $p_\beta\circ\sigma'$ is a free
$S_2$-embedding. If $\beta$ is large enough $\sigma'=\sigma_\beta$ and thus by (f)
$p_\beta\circ\sigma_\beta$ is not a free $S_2$-embedding contradicting the
choice of $\sigma'$.

\begin{remark}\label{GG}
It is an easy observation that by replacing (e) above with
\begin{itemize}
\item[(e')]if $V=\cup\set B^2(\beta):\beta\in A_\alpha.$ has a nonempty
interior in $\tau_\alpha^2$ and $(0,0)\in\Int(V)$ then there is $U\in{\cal U}_\alpha$ such
that $(0,0)\in U^2\subseteq \Int(V)$.

\end{itemize}
and adjusting the definition of a saturated ordinal to
\begin{itemize}
\item[(1)]if $\beta_1,\ldots,\beta_n\in O_\alpha$ and $K\subseteq
G^2$ is compact (in $\tau_\alpha^2$) then there exist $\beta^1,\ldots,\beta^k\in
O_\alpha$ such that each $B^2(\beta^i)\cap K$ is relatively open in $K$, and
$\overline{\cup_{i\leq n}B^2(\beta_i)}\subseteq\cup_{j\leq
k}B^2(\beta^j)\subseteq\overline{\cup_{j\leq k}B^2(\beta^j)}\subseteq
G^2\setminus F$.

\item[(2)]for each $\beta\in O_\alpha$ $t(\beta)<\alpha$ and there
exists $\beta\in O_\alpha$ such that $0\in B(\beta)$.
\end{itemize}
after assuming $F\subseteq G^2$ and defining $O_\alpha$ and $t(\beta)$ appropriately, one can
construct a group $G$ as above with the additional property that $G^2$
is sequential (indeed, all finite powers of $G$ can be made sequential
after a minor change to the method above). By `trapping' closed copies of
$S(\omega)$ in each $G_\alpha$ using Lemma~\ref{Sw} (or noting that such a copy must exist
in any sequential group that contains $G_\alpha$) and using
Lemmas~\ref{vD} and \ref{subD} one can construct a $G$ as above with
the additional property that the only sequential subgroups of $G$ are
closed and uncountable (in fact, the $G$ constructed above has this
property automatically, as noted above). This, in turn, implies that
every sequential subgroup of $G$ is either countable and discrete or
contains a compact subspace of uncountable pseudocharacter.

Finally, a more precise statement of Lemma~\ref{fr} (with a modified
proof) would allow a construction of such $G$ with $\so(G)=\omega+1$.
\end{remark}
The results above leave open a number of interesting questions.
\begin{question}\label{Group}
Does there exist (consistently or in ZFC) a sequential group $G$ such
that all countable sequential subgroups of $G$ are finite and all the
quotients of $G$ are either first countable or have uncountable
pseudocharacter?
\end{question}
Note that the construction of Example~\ref{nrg} produces a closed copy of
$S(\omega)$ in $G$. If $G\times G$ is also sequential then no
nontrivial quotient of $G$ can be first countable. Indeed, otherwise
some open homeomorphic image of $G\times G$ would contain closed
copies of both $S(\omega)$ and $D_\omega$ contradicting Lemma~\ref{vD}.

A very strong version of the question above is also open. Note that it
becomes meaningful only when the negation of CH is assumed.
\begin{question}\label{thegroup}
Does there exist a sequential group $G$ such that every sequential
subgroup of $G$ is either finite or closed, uncountable, and of
countable index and every quotient of $G$ is either first countable or
has a pseudocharacter $>\omega_1$? Can such $G$ (if exists) have any
sequential order?
\end{question}
Finally, if the quotient requirements are dropped, can the group be
made $k_\omega$?
\begin{question}Does there exist a sequential $k_\omega$ group $G$
such that every countable sequential subgroup of $G$ is discrete
(finite)? In particular does there exist a sequence as described in
Example~\ref{Rcycl} that makes $\R$ into such a group? Countably many
sequences?
\end{question}

\end{document}